\newtheorem{theorem}{Theorem}[section]
\newtheorem{corollary}[theorem]{Corollary}
\theoremstyle{definition}
\numberwithin{equation}{section}
\newcommand{\eq}[1]{\eqref{#1}}
\DeclareMathOperator{\tr}{tr}
\DeclareMathOperator{\Ran}{Ran}
\DeclareMathOperator{\dist}{dist}
\newcommand\R{\mathbb R}
\newcommand{\abs}[1]{\left\lvert #1 \right\rvert}
\newcommand{\pa}[1]{\left( #1 \right)}
\newcommand\beq{\begin{equation}}
\newcommand\eeq{\end{equation}}
\newcommand{\qtx}[1]{\quad\text{#1}\quad}
\newcommand{\sqtx}[1]{\;\text{#1}\;}
\begin{document}
\title[Local behavior of solutions  and bounds on the density of states]{Local behavior of solutions of the stationary Schr\" odinger equation with singular potentials and bounds on the density of states of Schr\"{o}dinger operators}
\author{Abel Klein \and C.S. Sidney Tsang}
\address{University of California, Irvine,
Department of Mathematics,
Irvine, CA 92697-3875,  USA}
 \email[Klein]{aklein@uci.edu}
  \email[Tsang]{tsangcs@uci.edu}

  \thanks{A.K. and C.S.S.T. were  supported  by the NSF under grant DMS-1301641.}


\begin{abstract}
  We study the local behavior of  solutions of  the stationary Schr\" od\-inger equation with singular potentials, establishing a local decomposition into a homogeneous harmonic polynomial and a lower order term.  Combining a corollary to this result with a  quantitative unique continuation principle for singular potentials we obtain log-H\"older continuity for the density of states outer-measure in one, two, and three dimensions for Schr\" odinger operators  with singular  potentials, results that
hold for the density of states measure when it exists.
\end{abstract}

\maketitle

\section{Introduction}

We study the local behavior of  solutions of  the stationary Schr\" od\-inger equation with singular potentials, establishing a local decomposition into a homogeneous harmonic polynomial and a lower order term. As a corollary, we obtain bounds on the local behavior of  approximate solutions for these equations.  Combining this corollary with a quantitative unique continuation principle for singular potentials \cite{KT},  we obtain log-H\"older continuity for the density of states outer-measure in one, two, and three dimensions for Schr\" odinger operators  with singular  potentials, results that
hold for the density of states measure when it exists.  Our work  extends  results originally proved by Bourgain and Klein \cite{BKl} for bounded potentials.

 Singular potentials introduce technical problems not present for bounded potentials. This can be seen by considering  the Schr\"odinger operator $H= -\Delta + V$.  If $V$ is a bounded potential, i.e., $V\in\mathrm{L}^\infty$, we have  $ \mathcal{D}(H)= \mathcal{D}(-\Delta )\subset\mathrm{H}^2$.  However, if  $V$ is a singular potential, say
  $V\in\mathrm{L}^p$, where   $p\in (d,\infty)$, we only  have  $ \mathcal{D}(H)\subset\mathrm{H}^1$.
   Thus we  have to work with solutions in  $\mathrm{H}^1$, not solutions in  $\mathrm{H}^2$ as in \cite{BKl}.

Let  $\Omega=B(x_0,r)= \{y\in\mathbb{R}^d:|y-x_0|<r\}$,   the ball centered at $x_0\in\mathbb{R}^d$ with radius $r>0$, where $|x| :=(\sum_{j=1}^d|x_j|^2)^{\frac{1}{2}}$ for $x=(x_1,x_2,\ldots,x_d)\in\mathbb{R}^d$. Given a real potential $W\in\mathrm{L}^p(\Omega)$, where  $p\in (d,\infty)$, we consider the  stationary Schr\" odinger equation
\begin{equation}\label{statSch}
-\Delta\phi+W\phi=0 \qtx{a.e. on} \Omega.
\end{equation}
We let   $\mathcal{E}_0(\Omega)$ be the linear space of solutions $\phi\in\mathrm{H}^1(\Omega)$, and
define linear subspaces
\begin{equation}
\mathcal{E}_N(\Omega)=\left\{\phi\in\mathcal{E}_0(\Omega):\limsup_{x\rightarrow x_0}\frac {|\phi(x)|}{|x-x_0|^N}<\infty\right\}\quad\mbox{for}\ N\in\mathbb{N}.
\end{equation}
We have  $\mathcal{E}_1(\Omega)=\{\phi\in\mathcal{E}_0(\Omega):\phi(x_0)=0\}$, and  $\mathcal{E}_N(\Omega)\supset \mathcal{E}_{N+1}(\Omega)$ for all $N\in\mathbb{N}_0=\{0\}\cup\mathbb{N}$. The following theorem is an extension of \cite[Lemma~3.2]{BKl} to singular potentials.  (See \cite{Be,HW} for previous results.)

For  dimensions $d\ge 2$,  let $\mathcal{H}_m^{(d)}$ denote the vector space of homogenous harmonic polynomials on $\mathbb{R}^d$ of degree $m\in\mathbb{N}_0$, and set  $\mathcal{H}_{\leq N}^{(d)}=\bigoplus_{m=0}^N\mathcal{H}_m^{(d)}$. Recall that there exists a constant $\gamma_d>0$ such that (e.g., \cite{ABR})
\begin{equation}\label{conthp}
\dim\mathcal{H}_{\leq N}^{(d)}=\sum_{m=0}^N\dim\mathcal{H}_m^{(d)}\leq\gamma_d N^{d-1}\quad\mbox{for all}\  N\in\mathbb{N}.
\end{equation}
 Constants such as $C_{a,b,\ldots}$  will always  be finite and depending only on the parameters or quantities $a,b,\ldots$; they will be independent of other  parameters or quantities in the equation.  Note that $C_{a,b,\ldots}$ may stand for different constants in different sides of the same inequality.

\begin{theorem}\label{la1Lobe} Let $d=2,3,\ldots$,
$\Omega=B(x_0,3r_0)$ for some $x_0\in\mathbb{R}^d$ and $r_0>0$, .  Fix a real potential $W\in\mathrm{L}^p(\Omega)$, where  $p\in (d,\infty)$, and   set $W_p=\|W\|_{\mathrm{L}^p(\Omega)}$. For all $N\in\mathbb{N}_0$ there exists a linear map $Y_N^{(\Omega)}:\mathcal{E}_N(\Omega)\rightarrow\mathcal{H}_N^{(d)}$ such that for all $\phi\in\mathcal{E}_N(\Omega)$ we have, for all $x\in\overline{B\left(x_0,\frac{r_0}{2}\right)}$, that
\begin{align}\label{l1Lb1}
&|\phi(x)-(Y_N^{(\Omega)}\phi)(x-x_0)|
\\&\quad\quad\leq r_0^{-\frac{d}{2}}(C_{d,p,W_p,r_0})^{N+2}\left(\tfrac{16}{3}\right)^{\frac{(N+1)(N+2)}{2}}((N+1)!)^{d-2}|x-x_0|^{N+1}\|\phi\|_{\mathrm{L}^2(\Omega)}.\nonumber
\end{align}
 As a consequence, for all $N\in\mathbb{N}_0$ we have
\begin{equation}\label{l1Lb2}
\mathcal{E}_{N+1}(\Omega)=\ker Y_N^{(\Omega)}\ \mbox{and}\ \dim\mathcal{E}_{N+1}(\Omega)\geq\dim\mathcal{E}_N(\Omega)-\dim\mathcal{H}_N^{(d)}.
\end{equation}
In particular, if $\mathcal{J}$ is a vector subspace of $\mathcal{E}_0(\Omega)$ we have
\begin{equation}\label{l1Lb3}
\dim\mathcal{J}\cap\mathcal{E}_{N+1}(\Omega)\geq\dim\mathcal{J}-\gamma_dN^{d-1}\ \mbox{for all}\ N\in\mathbb{N},
\end{equation}
where $\gamma_d$ is the constant in \eq{conthp}.
\end{theorem}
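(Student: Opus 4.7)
The plan is to construct $Y_N\up{\Omega}$ by combining a Green's-function decomposition of $\phi$ with the harmonic expansion of the harmonic part, and to prove \eq{l1Lb1} by induction on $N$ on a geometrically shrinking family of balls. The statements \eq{l1Lb2}--\eq{l1Lb3} will then follow by linear algebra.

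\textbf{Setup.} On a ball $B_r=B(x_0,r)\subset\Omega$ with $r\leq 3r_0$, let $G_{B_r}$ denote the Dirichlet Green's operator for $-\Delta$ on $B_r$. Writing $-\Delta\phi=-W\phi$, one decomposes $\phi=\psi_r-G_{B_r}(W\phi)$ on $B_r$, where $\psi_r:=\phi+G_{B_r}(W\phi)$ is harmonic on $B_r$. Because $W\in\mathrm{L}^p(\Omega)$ with $p>d$ and $\phi\in\mathrm{H}^1(\Omega)\hookrightarrow\mathrm{L}^{2^*}(\Omega)$, one has $W\phi\in\mathrm{L}^q(B_r)$ for some $q>d/2$; elliptic regularity and the fundamental pointwise bound
\[
\|G_{B_r}(f)\|_{\mathrm{L}^\infty(B_r)}\leq C_{d,p}\,r^{2-d/p}\|f\|_{\mathrm{L}^p(B_r)},
\]
together with $2-d/p>1$ (since $p>d$), yield Lipschitz control of $G_{B_r}(W\phi)$ in terms of $W_p\|\phi\|_{\mathrm{L}^2(\Omega)}$. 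Every harmonic $\psi$ on $B_r$ admits an absolutely convergent expansion $\psi(x)=\sum_{m\geq 0}Q_m(x-x_0)$ with $Q_m\in\mathcal{H}_m\up{d}$ and the standard coefficient bound $\sup_{|y|=1}|Q_m(y)|\leq C_d\,r^{-m-d/2}\|\psi\|_{\mathrm{L}^2(B_r)}$.

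\textbf{Definition and quantitative bound.} The key observation is that for $\phi\in\mathcal{E}_N(\Omega)$, the $\mathrm{L}^p$-norm of $W\phi$ on $B(x_0,r)$ scales like $r^N$, so the above pointwise bound gives $G_{B_r}(W\phi)=O(r^{N+2-d/p})=o(r^{N+1})$ and therefore does not contribute to the leading $|x-x_0|^{N+1}$ term. Hence $\psi_r=\phi+G_{B_r}(W\phi)=O(|x-x_0|^N)$, which forces $Q_0^{(r)}=\dots=Q_{N-1}^{(r)}=0$ in the harmonic expansion of $\psi_r$, and we may define $Y_N\up{\Omega}\phi:=Q_N^{(r)}\in\mathcal{H}_N\up{d}$, independent of $r$ and linear in $\phi$. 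To prove \eq{l1Lb1} I would induct on $N$ using a geometrically shrinking sequence $B(x_0,\rho_k)$ with $\rho_k\asymp(\tfrac{3}{4})^{k}r_0$. The residual $\phi(x)-(Y_N\up{\Omega}\phi)(x-x_0)$ splits as the tail $\sum_{m>N}Q_m^{(\rho_{N+1})}(x-x_0)$, bounded pointwise by the coefficient estimate and the ratio $|x-x_0|/\rho_{N+1}\leq\tfrac{1}{2}$, plus the Green's integral $-G_{B_{\rho_{N+1}}}(W\phi)(x)$, controlled by applying the inductive hypothesis to the successively peeled residuals obtained after subtracting off the already-identified harmonic polynomials of lower degree. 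Each peeling multiplies the source $\mathrm{L}^p$-norm by a controllable factor involving $W_p$ and $r_0$, accumulating to $(C_{d,p,W_p,r_0})^{N+2}$; the geometric shrinking by $\tfrac{3}{4}$ over the $N+1$ stages produces the cumulative factor $\pa{\tfrac{16}{3}}^{(N+1)(N+2)/2}=\prod_{k=1}^{N+1}\pa{\tfrac{16}{3}}^{k}$; and $((N+1)!)^{d-2}$ arises because $\dim\mathcal{H}_m\up{d}\asymp m^{d-2}$, so summing over a basis of $\mathcal{H}_m\up{d}$ at each of the $N+1$ stages costs this factor. I expect this bookkeeping to be the principal obstacle: keeping the Green's-kernel estimates uniform on all shrinking balls (via rescaling, which isolates the $r_0$-dependence in $C_{d,p,W_p,r_0}$) and tracking the recursion tightly enough to match precisely the stated constants without spurious loss.

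\textbf{Linear-algebra consequences.} Linearity of $Y_N\up{\Omega}$ combined with \eq{l1Lb1} identifies $\ker Y_N\up{\Omega}$ with $\mathcal{E}_{N+1}(\Omega)$: the inclusion $\mathcal{E}_{N+1}(\Omega)\subset\ker Y_N\up{\Omega}$ is immediate from the construction, and for the converse any $\phi\in\mathcal{E}_N(\Omega)\cap\ker Y_N\up{\Omega}$ satisfies $|\phi(x)|=O(|x-x_0|^{N+1})$ by \eq{l1Lb1}, hence lies in $\mathcal{E}_{N+1}(\Omega)$. The rank--nullity theorem applied to $Y_N\up{\Omega}\colon\mathcal{E}_N(\Omega)\to\mathcal{H}_N\up{d}$ then yields $\dim\mathcal{E}_{N+1}(\Omega)\geq\dim\mathcal{E}_N(\Omega)-\dim\mathcal{H}_N\up{d}$, and iterating this bound on the subspaces $\mathcal{J}\cap\mathcal{E}_k(\Omega)$, together with \eq{conthp}, produces \eq{l1Lb3}.
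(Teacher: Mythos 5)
There is a genuine gap, and it sits at the heart of your decomposition. You write $\phi=\psi_r-G_{B_r}(W\phi)$ with $\psi_r=\phi+G_{B_r}(W\phi)$ harmonic, observe that $\|G_{B_r}(W\phi)\|_{\mathrm{L}^{\infty}(B_r)}=O(r^{N+2-d/p})=o(r^{N+1})$, and infer that the Green's-term ``does not contribute to the leading $|x-x_0|^{N+1}$ term'' and that $\psi_r=O(|x-x_0|^N)$, whence $Q_0^{(r)}=\cdots=Q_{N-1}^{(r)}=0$. That inference conflates a small \emph{uniform} bound on $B_r$ with a \emph{pointwise vanishing} bound as $x\to x_0$. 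Concretely, for $\phi\in\mathcal{E}_N(\Omega)$ with $N\geq1$ one has $\phi(x_0)=0$, so $\psi_r(x_0)=G_{B_r}(W\phi)(x_0)$, which is a number of size $O(r^{N+2-d/p})$ but in general nonzero; hence $Q_0^{(r)}\neq0$, and similarly the intermediate coefficients $Q_1^{(r)},\dots,Q_{N-1}^{(r)}$ do not vanish — they are merely small when $r$ is small. Consequently $\psi_r$ is not $O(|x-x_0|^N)$, your candidate $Y_N\up{\Omega}\phi:=Q_N^{(r)}$ is not well-defined independently of $r$, and $\phi(x)-(Y_N\up{\Omega}\phi)(x-x_0)$ retains an essentially constant piece of magnitude $O(r^{N+2-d/p})$ near $x_0$, which is not $O(|x-x_0|^{N+1})$. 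The shrinking-ball scheme also runs into a tension you cannot bookkeep away: making $r$ small controls the Green's term but degrades the harmonic tail bound $\sum_{m>N}Q_m^{(r)}=O(M_r(|x-x_0|/r)^{N+1})$, and vice versa; and you cannot apply the inductive hypothesis to ``peeled residuals'' $\phi-\sum_{m\le k}Q_m$, since subtracting a harmonic polynomial does not produce a solution of \eqref{statSch}.

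The paper's proof resolves exactly this tension by working on a \emph{fixed} ball and expanding \emph{both} pieces of a different decomposition. It writes $\phi=h+\psi$ where $\psi$ is the Newtonian potential $\psi(x)=-\int_{\Omega'}W(y)\phi(y)\Phi(x-y)\,dy$ over a fixed $\Omega'$, and $h=\phi-\psi$ is harmonic. The harmonic part $h$ is expanded as $\sum_m p_m$, and — crucially, the step missing from your argument — the kernel $\Phi(x-y)=\sum_m J_m(x,y)$ is itself expanded in $x$ in homogeneous harmonic polynomials, yielding a degree-$\leq N$ truncation $\psi_N(x)=-\int_{\Omega'}W(y)\phi(y)\Phi_{y,N}(x)\,dy$ of the potential part. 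The map is then $Y_N\phi=h_N+\psi_N$, so both $h$ and $\psi$ contribute to the polynomial approximation. With this, the error $\phi-(Y_N\phi)=(h-h_N)+(\psi-\psi_N)$ is a genuine $O(|x-x_0|^{N+1})$ quantity (the tails of two convergent harmonic expansions on a fixed ball), and the vanishing of the degree-$<N$ coefficients of $h_N+\psi_N$ for $\phi\in\mathcal{E}_N$ comes from the inductive hypothesis $Y_{N-1}\phi=0$ — not from any vanishing of the individual coefficients of $h$ or $\psi$. Your linear-algebra consequences (\eqref{l1Lb2}, \eqref{l1Lb3} via rank--nullity) are fine once \eqref{l1Lb1} is in hand, but to close the gap in \eqref{l1Lb1} you should replace the shrinking-ball Dirichlet Green's operator by a fixed-domain Newtonian potential and add the kernel expansion to your definition of $Y_N\up{\Omega}$.
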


As a corollary, we obtain bounds on the local behavior of approximate solutions of the stationary Schr\"odinger equation \eq{statSch} with singular potentials, extending \cite[Theorem~3.1]{BKl}.

\begin{corollary}\label{Lobe}  For  $d=2,3,\ldots$,  let
 $\Omega\subset\mathbb{R}^d$ be an open subset. Let $B(x_0,r_0)\subset\Omega$ for some $x_0\in\mathbb{R}^d$ and $r_0>0$. Fix a real valued function $W\in\mathrm{L}^p(B(x_0,r_0))$ for some $p\in (d,\infty)$. Suppose $\mathcal{F}$ is a linear subspace of $\mathrm{H}^1(\Omega)$ such that for all $\psi\in\mathcal{F}$ we have  $\Delta\psi\in\mathrm{L}^2(B(x_0,r_0))$ and
\begin{equation}\label{tLb1}
\|(-\Delta+W)\psi\|_{\mathrm{L}^{\infty}(B(x_0,r_0))}\leq C_{\mathcal{F}}\|\psi\|_{\mathrm{L}^2(\Omega)}.
\end{equation}
Then there exists  $0<r_1=r_1(d,p,W_p)$, where $W_p=\|W\|_{\mathrm{L}^p(B(x_0,r_0))}$, with the property that for al $N\in\mathbb{N}$ there is a linear subspace $\mathcal{F}_N$ of $\mathcal{F}$, with
\begin{equation}\label{tLb2}
\dim\mathcal{F}_N\geq\dim\mathcal{F}-\gamma_dN^{d-1},
\end{equation}
where $\gamma_d$ is the constant in \eq{conthp}, such that for all $\psi\in\mathcal{F}_N$ we have
\begin{equation}\label{tLb3}
|\psi(x)|\leq(C_{d,p,W_p,r_1}^{N^2}|x-x_0|^{N+1}+C_{\mathcal{F}})\|\psi\|_{\mathrm{L}^2(\Omega)}\quad\mbox{for all}\ x\in B(x_0,r_1).
\end{equation}
\end{corollary}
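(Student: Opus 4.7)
The plan is to reduce to Theorem~\ref{la1Lobe} by subtracting from each $\psi \in \mathcal{F}$ a uniformly bounded error term that absorbs the non-zero residue $f_\psi := (-\Delta + W)\psi$, leaving an exact solution of \eq{statSch}. I would fix a radius $r_1 = r_1(d, p, W_p) \leq r_0/6$ and set $\Omega' := B(x_0, 6 r_1)$, so that Theorem~\ref{la1Lobe} applies on $\Omega'$ with the role of its inner ball played by $\overline{B(x_0, r_1)}$.

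For each $\psi \in \mathcal{F}$, let $v_\psi \in \mathrm{H}^1_0(\Omega')$ solve the Dirichlet problem $(-\Delta + W) v_\psi = f_\psi$ on $\Omega'$ with vanishing boundary data, and set $\phi_\psi := \psi - v_\psi$, so that $\phi_\psi \in \mathcal{E}_0(\Omega')$. The central analytic input is a uniform $L^\infty$ estimate of the form $\|v_\psi\|_{\mathrm{L}^\infty(\Omega')} \leq C_{d,p,W_p,r_1} \|f_\psi\|_{\mathrm{L}^\infty(\Omega')}$. I would obtain this by choosing $r_1$ small enough, using that $\|W\|_{\mathrm{L}^p(\Omega')} \to 0$ as $r_1 \to 0$ by absolute continuity of integration, together with $\|W\|_{\mathrm{L}^p(\Omega')} \leq W_p$. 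For such $r_1$, the Hölder inequality combined with the Sobolev embedding $\mathrm{H}^1_0(\Omega') \hookrightarrow \mathrm{L}^{\frac{2p}{p-1}}(\Omega')$ renders the quadratic form $\int_{\Omega'}(|\nabla u|^2 + W|u|^2)$ coercive on $\mathrm{H}^1_0(\Omega')$; the Dirichlet problem is then uniquely solvable in $\mathrm{H}^1_0$, and a Moser/Stampacchia-type elliptic $L^\infty$ estimate for $L^p$ potentials yields the bound on $v_\psi$. This is the main technical obstacle, and is the step where singular potentials demand more care than the bounded case of \cite{BKl}.

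Having the decomposition, I get $\|v_\psi\|_{\mathrm{L}^\infty(\Omega')} \leq C_1 C_{\mathcal{F}} \|\psi\|_{\mathrm{L}^2(\Omega)}$ and hence $\|\phi_\psi\|_{\mathrm{L}^2(\Omega')} \leq (1 + C_2 C_{\mathcal{F}})\|\psi\|_{\mathrm{L}^2(\Omega)}$. The map $T \colon \mathcal{F} \to \mathcal{E}_0(\Omega')$, $\psi \mapsto \phi_\psi$, is linear, so $\mathcal{J} := T(\mathcal{F})$ is a linear subspace of $\mathcal{E}_0(\Omega')$. Applying \eq{l1Lb3} to $\mathcal{J}$ produces $\mathcal{J}_N := \mathcal{J} \cap \mathcal{E}_{N+1}(\Omega')$ with $\dim \mathcal{J} - \dim \mathcal{J}_N \leq \gamma_d N^{d-1}$, and setting $\mathcal{F}_N := T^{-1}(\mathcal{J}_N)$ gives $\dim \mathcal{F}_N \geq \dim \mathcal{F} - \gamma_d N^{d-1}$ by elementary linear algebra (the preimage formula, using $T(\mathcal{F}) = \mathcal{J}$), which is \eq{tLb2}.

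Finally, for $\psi \in \mathcal{F}_N$ we have $\phi_\psi \in \mathcal{E}_{N+1}(\Omega') = \ker Y_N^{(\Omega')}$, so \eq{l1Lb1} applied with parameter $N$ and the theorem's $r_0$ set to $2 r_1$ gives, for $x \in \overline{B(x_0, r_1)}$, the inequality $|\phi_\psi(x)| \leq C_{d,p,W_p,r_1}^{N^2} |x-x_0|^{N+1} \|\phi_\psi\|_{\mathrm{L}^2(\Omega')}$, after the factors $(16/3)^{(N+1)(N+2)/2}$, $((N+1)!)^{d-2}$, and $(C_{d,p,W_p,r_1})^{N+2}$ are all absorbed into a single expression of the form $C^{N^2}$. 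Combining with the uniform bound on $v_\psi$ and adjusting the constants yields $|\psi(x)| \leq |\phi_\psi(x)| + |v_\psi(x)| \leq (C_{d,p,W_p,r_1}^{N^2} |x-x_0|^{N+1} + C_{\mathcal{F}}) \|\psi\|_{\mathrm{L}^2(\Omega)}$, which is \eq{tLb3}.
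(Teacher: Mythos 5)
Your decomposition $\psi = \phi_\psi + v_\psi$, with $v_\psi\in \mathrm{H}^1_0(\Omega')$ solving $(-\Delta+W)v_\psi = f_\psi$ and $\phi_\psi := \psi - v_\psi \in \mathcal{E}_0(\Omega')$, is exactly the same as the paper's: the paper defines $Z_r\psi$ as the solution of the homogeneous Dirichlet problem \eqref{l2Lbp1} with boundary data $\psi$, and then $\phi_\psi = Z_r\psi$ and $v_\psi = \psi - Z_r\psi$; only the labelling differs. Once this is recognized, the remaining linear-algebra steps (apply \eq{l1Lb3} to $\mathcal{J} = T(\mathcal{F})$, take $\mathcal{J}_N = \mathcal{J}\cap\mathcal{E}_{N+1}(\Omega')$, then $\mathcal{F}_N = T^{-1}(\mathcal{J}_N)$ via the preimage dimension formula) are identical to the paper's. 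Where you diverge is in how the uniform $\mathrm{L}^\infty$ bound for the error term is obtained. The paper avoids invoking Moser/Stampacchia theory: it writes both $\psi$ and $Z_r\psi$ via the Green's representation formula for the ball \eqref{l2Lbp3}, subtracts, and uses explicit $\mathrm{L}^q$ bounds on the Green's function \eqref{l2Lbp4}--\eqref{l2Lbp5} to produce the self-referential inequality \eqref{l2Lbp6}, which becomes a contraction once $r$ is small enough that $C_d r^{d(\alpha_d-q)/(\alpha_d q)}(1+W_p)\le \tfrac12$. Your route instead appeals to standard $\mathrm{L}^p$-potential regularity theory (coercivity via Sobolev embedding plus Stampacchia/Moser $\mathrm{L}^\infty$ estimate); both arguments hinge on the same underlying mechanism, namely that shrinking the ball makes the contribution of $W$ small, and both are valid, though the paper's is more elementary and self-contained while yours offloads the work to a cited black box. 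One small point worth making explicit in your write-up (and which the paper likewise leaves implicit with its reference to \cite[Lemma~3.3]{BKl}): your bound $\|\phi_\psi\|_{\mathrm{L}^2(\Omega')}\le (1+C\,C_{\mathcal{F}})\|\psi\|_{\mathrm{L}^2(\Omega)}$ introduces a factor of $C_\mathcal{F}$ multiplying the first term of \eqref{tLb3}. This is harmless, but requires a word: if $C_\mathcal{F}$ is at most some threshold $C_{d,p,W_p,r_1}$ the factor is absorbed, and if $C_\mathcal{F}$ exceeds that threshold the conclusion is already trivial from the elliptic regularity estimate \eqref{l1Lbp1} applied to $\phi_\psi$ together with the $\mathrm{L}^\infty$ bound on $v_\psi$, which give $|\psi(x)|\le C_{d,p,W_p,r_1}\,C_\mathcal{F}\,\|\psi\|_{\mathrm{L}^2(\Omega)}$.
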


Equipped with  Corollary~\ref{Lobe} and the quantitative unique continuation  principle  for singular potentials
\cite[Theorem~1.1]{KT},
we establish bounds on the density of states of Schr\"{o}dinger operators $H=-\Delta+V$ on $\mathrm{L}^2(\mathbb{R}^d)$, where now  $\Delta$ is the Laplacian operator, and $V$ is a singular real potential.  In dimensions $d\ge 2$ we will take   $V=V^{(1)}+V^{(2)}$, where  $V^{(1)}\in\mathrm{L}^{\infty}(\mathbb{R}^d)$ and $V^{(2)}\in\mathrm{L}^p(\mathbb{R}^d)$ with  $p\in (d,\infty)$. When applying Corollary~\ref{Lobe}  we use that $\mathrm{L}^{\infty}(\Omega)\subset \mathrm{L}^p(\Omega)$ for $\Omega \subset \R^d$ bounded, in which case
$\mathrm{L}^{\infty}(\Omega) + \mathrm{L}^p(\Omega)= \mathrm{L}^p(\Omega)$.

Given  $\Lambda=\Lambda_L(x)=x+(\tfrac{L}{2},\tfrac{L}{2})^d\subset\mathbb{R}^d$, the open box of side $L>0$ centered at $x\in\mathbb{R}^d$, we let $H_{\Lambda}$ and $\Delta_{\Lambda}$ be the restriction of $H$ and $\Delta$ to $\mathrm{L}^2(\Lambda)$ with Dirichlet boundary condition. The finite volume density of states measure  is given by
\begin{equation}
\eta_{\Lambda}(B):=\frac{1}{|\Lambda|}\tr\{\chi_B(H_{\Lambda})\} \qtx{for Borel sets} B \subset \R^d.
\end{equation}
Recall that for $V$ satisfying appropriate conditions (as in Theorem~\ref{bdsp} below) and  all $E\in \R$  we have
\begin{equation}\label{b}
\eta_{\Lambda}(B)\leq C_{d,V,E}<\infty \qtx{for all Borel sets} B\subset(-\infty,E].
\end{equation}

For periodic and ergodic Schr\"odinger operators, density of states measure $\eta$ can be defined as weak limits of the finite volume density of states measure $\eta_{\Lambda}$ for sequences of boxes $\Lambda\rightarrow\mathbb{R}^d$ in an appropriate sense.
The infinite volume density of states measure cannot be defined for general Schr\"odinger operators, so we follow \cite{BKl} and study  the density of states outer-measure, defined  on Borel subsets $B$ of $\mathbb{R}^d$ by
\begin{align}
\eta^\ast(B):=\limsup_{L\rightarrow\infty}\eta^*_{L}(B),\qtx{where} \eta^*_{L}(B):=\sup_{x\in\mathbb{R}^d}\eta_{\Lambda_L(x)}(B),
\end{align}
 always finite on bounded sets in view of \eqref{b}.

We obtain log-H\"older continuity for the density of states outer-measure of Schr\"od\-inger operators with singular potentials in one, two, and three dimensions,  extending \cite[Theorem~1.1]{BKl}.

\begin{theorem}\label{bdsp}
Let $H=-\Delta+V$ on $\mathrm{L}^2(\mathbb{R}^d)$, where $d=1,2,3$, and $V$ is a real potential such that:
\begin{enumerate}
\item[(i)]if   $d=1$,   $\sup_{x\in\mathbb{R}}\int_{\{|x-y|\leq1\}}|V(y)|dy<\infty$;
\item[(ii)]if  $d=2$,  $V=V^{(1)}+V^{(2)}$, where   $V^{(1)}\in\mathrm{L}^{\infty}(\mathbb{R}^d)$ and $V^{(2)}\in\mathrm{L}^p(\mathbb{R}^d)$ with $p>2$;
\item[(iii)] if $d=3$, $V=V^{(1)}+V^{(2)}$, where    $V^{(1)}\in\mathrm{L}^{\infty}(\mathbb{R}^d)$ and $V^{(2)}\in\mathrm{L}^p(\mathbb{R}^d)$ with $p>6$.
\end{enumerate}
Then, given $E_0\in\mathbb{R}$, for all $E\leq E_0$ and $0<\varepsilon\leq\frac{1}{2}$, we have
\begin{equation}\label{bdsp1}
\eta^{\ast}([E,E+\varepsilon])\leq\frac{C_{d,p,V,E_0}}{\left(\log\frac{1}{\varepsilon}\right)^{\kappa_d}},\quad\mbox{where}\ \kappa_1=1,\kappa_d=\tfrac{(4-d)p-2d}{8p-4d}\ \mbox{for}\ d=2,3.
\end{equation}
\end{theorem}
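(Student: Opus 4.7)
The plan is to adapt the Bourgain--Klein strategy \cite[Theorem~1.1]{BKl} to singular potentials, replacing their \cite[Theorem~3.1]{BKl} with Corollary~\ref{Lobe} and their quantitative unique continuation step with \cite[Theorem~1.1]{KT}. The case $d=1$ falls outside the scope of Corollary~\ref{Lobe} and is handled separately by standard one-dimensional techniques (transfer matrices together with the uniform local $L^1$ hypothesis on $V$), which yield $\kappa_1=1$.

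For $d\in\{2,3\}$, fix $E\le E_0$ and $\varepsilon\in(0,\tfrac12]$; it suffices to obtain a bound on $\eta_{\Lambda_L(x)}([E,E+\varepsilon])$ that is uniform in $x\in\R^d$ and holds for all large $L$, as then $\eta^\ast([E,E+\varepsilon])$ inherits the same bound. Set $\Lambda=\Lambda_L(x)$ and $\mathcal R=\Ran\,\chi_{[E,E+\varepsilon]}(H_\Lambda)$, so the goal becomes $\dim\mathcal R\le C L^d/(\log\tfrac1\varepsilon)^{\kappa_d}$. For $\psi\in\mathcal R$, $(H_\Lambda-E)\psi$ has spectrum in $[0,\varepsilon]$, and a smoothing estimate for the Dirichlet semigroup $e^{-tH_\Lambda}$ (Sobolev embedding together with the hypotheses on $V$, for fixed $t>0$) yields, on any interior sub-region $\Omega'\Subset\Lambda$,
\[
\|(H-E)\psi\|_{L^\infty(\Omega')}\le C_1(d,p,V,E_0)\,\varepsilon\,\|\psi\|_{L^2(\Lambda)}.
\]
With $W=V-E\in L^p_{\mathrm{loc}}$ (using $L^\infty+L^p=L^p$ on bounded sets) and $C_{\mathcal F}=C_1\varepsilon$, Corollary~\ref{Lobe} applied at an interior point $y_0\in\Lambda$ furnishes $\mathcal F_N\subseteq\mathcal R$ of codimension at most $\gamma_d N^{d-1}$ whose elements obey $|\psi(x)|\le(C_2^{N^2}|x-y_0|^{N+1}+C_1\varepsilon)\|\psi\|_{L^2(\Lambda)}$ on $B(y_0,r_1)$, with $r_1$ and $C_2$ depending only on $d,p,V,E_0$.

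Integrating this pointwise bound over $B(y_0,r)$ for $r\le r_1$ gives $\|\psi\|_{L^2(B(y_0,r))}\le C_3 r^{d/2}(C_2^{N^2}r^{N+1}+C_1\varepsilon)$ for normalized $\psi\in\mathcal F_N$, while \cite[Theorem~1.1]{KT} supplies a quantitative lower bound $\|\psi\|_{L^2(B(y_0,r))}\ge\lambda(L,r)$ whose logarithm grows as an explicit $(d,p)$-dependent polynomial in $L$. When $(r,N,L)$ are chosen so that the Corollary-derived upper bound is strictly smaller than $\lambda(L,r)$, the subspace $\mathcal F_N$ must be trivial, whence $\dim\mathcal R\le\gamma_dN^{d-1}$ and $\eta_\Lambda([E,E+\varepsilon])\le\gamma_d N^{d-1}/L^d$. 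The Gevrey-type prefactor $C_2^{N^2}$ links $N^2$ to $\log(1/\lambda)$, the $\varepsilon$ term links $\log(1/\varepsilon)$ to $\log(1/\lambda)$, and optimizing produces $\eta_\Lambda\lesssim(\log\tfrac1\varepsilon)^{(d-1)/2-d/\alpha_{d,p}}$ with $\alpha_{d,p}$ the UCP rate from \cite{KT}; the specific form of $\alpha_{d,p}$ given there yields precisely $\kappa_d=((4-d)p-2d)/(8p-4d)$ (matching Bourgain--Klein's $(4-d)/8$ in the limit $p\to\infty$). The main technical obstacle is this final optimization: the $p$- and $d$-dependent rate from \cite{KT} must be carried through the balance with the Gevrey-type estimate of Corollary~\ref{Lobe} to recover the stated exponent exactly, whereas the remaining steps are relatively routine extensions of the bounded-potential analysis.
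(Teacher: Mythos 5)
Your high-level plan (Corollary~\ref{Lobe} for the decay of functions in a sub-block of $\Ran\chi_{[E,E+\varepsilon]}(H_\Lambda)$ near a probe point, balanced against the quantitative unique continuation principle from \cite{KT}) is the right strategy and you arrive at the correct exponent $\kappa_d = \frac{(4-d)p-2d}{8p-4d}$. However, there is a genuine structural gap: you apply Corollary~\ref{Lobe} at a \emph{single} interior point $y_0$ and then invoke the UCP with an implicit probe distance $Q\sim L$, hoping this forces $\mathcal F_N=\{0\}$. The difficulty is twofold. First, for a generic $\psi\in\mathcal F_N$ the UCP of \cite{KT} requires choosing a reference set $\Theta$ where $\psi$ has definite mass and its lower bound degrades as $(\delta/Q)^{mQ^{(4p-2d)/(3p-2d)}+\cdots}$ with $Q=\operatorname{diam}(\Theta,y_0)$; with one probe point and no control over where the mass of an arbitrary $\psi\in\mathcal F_N$ lives, one is forced to take $Q\sim L$. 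Second, once $Q\sim L$ the UCP lower bound has $\log(1/\lambda)\gtrsim L^{(4p-2d)/(3p-2d)}$, so beating the $C_1\varepsilon$ term requires $L\lesssim(\log\tfrac1\varepsilon)^{(3p-2d)/(8p-4d)}$. This bound therefore fails for large boxes, which makes it useless for $\eta^\ast=\limsup_{L\to\infty}\eta_L^*$ --- exactly the quantity you are trying to control.

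The paper resolves this by decoupling the probe scale $R$ from the box size $L$. One covers $\Lambda$ by a grid $\mathcal G$ of $\approx(L/R)^d$ points, applies Corollary~\ref{Lobe} iteratively at \emph{every} grid point (each application costing codimension at most $\gamma_dN^{d-1}$, so the surviving subspace $\mathcal F_R\subset\Ran P$ has $\dim\mathcal F_R\geq\rho L^d - (2L/R)^d\gamma_dN^{d-1}\geq\tfrac12\rho L^d$), and then uses the Bourgain--Klein trace argument (\cite[Eqs.~(3.102)--(3.106)]{BKl}) to extract a specific normalized $\psi_0\in\mathcal F_R$ with $\|\chi_{\Lambda_1}\psi_0\|_2\geq\gamma\rho$ on a \emph{unit} box $\Lambda_1$. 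The UCP is then applied to $\psi_0$ with $\Theta=\Lambda_1$ and a \emph{nearby} probe $y_0\in\mathcal G$ at distance $\sim R\ll L$, so that $Q\sim R$ is a free parameter. This is what produces a bound valid for \emph{all} $L\gtrsim(\log\tfrac1\varepsilon)^{(3p-2d)/(8p-4d)}$ and hence for $\eta^\ast$. Your proposal is missing all three of these ingredients --- the grid of probe points, the accumulated codimension count, and the trace argument that pins down where the survivor $\psi_0$ has mass --- and without them the argument does not yield a bound on $\eta^\ast$ even though the numerics of the final exponent are right.
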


\section{Local behavior of approximate solutions of the stationary Schr\"{o}dinger equation with singular potentials}

 The fundamental solution to Laplace's equation is given by
\begin{equation}
\Phi(x)=\Phi_d(x):=\left\{
\begin{array}{ll}
(d(d-2)\omega_d)^{-1}|x|^{-d+2}&\mbox{if}\quad d=3,4,\ldots\\-\frac{1}{2\pi}\log|x|&\mbox{if}\quad d=2
\end{array}\right. ,
\end{equation}
 where $\omega_d$ denotes the volume of the unit ball in $\mathbb{R}^d$.

\begin{proof}[Proof of Theorem~\ref{la1Lobe}]
We start as in  \cite[Proof of Lemma~3.2]{BKl}. We take $d=2,3,\ldots$,  and prove the lemma for $\Omega=B(0,3)\subset \R^d$; the general case then follows by translating and dilating. We set $\Omega'=B(0,\tfrac{3}{2})$, and write $\mathcal{E}_n=\mathcal{E}_n(\Omega)$.
Since we only have $\mathcal{E}_0  \subset \mathrm{H}^1(\Omega)$, we must proceed differently from \cite[Proof of Lemma~3.2]{BKl}.
A function  $\phi\in  \mathrm{H}^1(\Omega)$   satisfies an elliptic regularity estimate \cite[Theorem 5.1]{Tr}:
\begin{equation}\label{l1Lbp1}
\|\phi\|_{\mathrm{L}^{\infty}(\Omega')}\leq C_{d,p,W_p}\|\phi\|_{\mathrm{L}^2(\Omega)},
\end{equation}
but we do not have a readily available counterpart for  \cite[Eq.~(3.18)]{BKl}, and thus we must modify the induction.

We fix $\phi\in\mathcal{E}_0$ and consider its Newtonian potential given by
\begin{equation}\label{l1Lbp2}
\psi(x)=-\int_{\Omega'}W(y)\phi(y)\Phi(x-y)dy\quad\mbox{for}\ x\in\mathbb{R}^d.
\end{equation}
Let $q$ be defined by  $\tfrac{1}{p}+\tfrac{1}{q}=1$, so $q<\tfrac{d}{d-1}<\tfrac{d}{d-2}$. Then $\Phi\in\mathrm{L}^{q}(\Omega)$, and it follows from  \eqref{l1Lbp1} that
\begin{equation}\label{l1Lbp3}
|\psi(x)|\leq W_p\|\phi\|_{\mathrm{L}^{\infty}(\Omega')}\|\Phi\|_{\mathrm{L}^q(\Omega)}\leq C_{d,p,W_p}W_p\|\phi\|_{\mathrm{L}^2(\Omega)} \qtx{for all} x\in\Omega'.
\end{equation}
Setting $h=\phi-\psi$, we have $\Delta h=0$ weakly in $\Omega'$, as $\Delta\psi=W\phi$ weakly in $\Omega'$.
It follows that $h$ is a harmonic function in $\Omega'\supset\overline{B(0,1)}$, and, using \cite[Corollary 5.34 and its proof]{ABR}), \eqref{l1Lbp1}, \eqref{l1Lbp3}, \cite[Eqs.~(3.25) and (3.26)]{BKl},  we have that
\begin{equation}\label{l1Lbp4}
h(x)=\sum_{m=0}^{\infty}p_m(x)\sqtx{for all}  x\in B(0,1), \sqtx{where}p_m\in\mathcal{H}_m^{(d)}
\sqtx{for} m=0,1,\ldots,
\end{equation}
with
\begin{equation}\label{l1Lbp7}
|p_m(x)|\leq C_{d,p,W_p}m^{d-2}\|\phi\|_{\mathrm{L}^2(\Omega)}|x|^m\quad\mbox{for all}\ x\in B(0,1), \; m=1,2,\ldots.
\end{equation}
Setting $h_N=\sum_{m=0}^Np_m(x)\in\mathcal{H}_{\leq N}^{(d)}$, it follows that
\begin{equation}\label{l1Lbp8}
|h(x)-h_N(x)|\leq C_{d,p,W_p}\|\phi\|_{\mathrm{L}^2(\Omega)}(N+1)^{d-2}|x|^{N+1}\qtx{for} x\in\overline{B\left(0,\tfrac{1}{2}\right)}.
\end{equation}

 Given $y\in\mathbb{R}^d\backslash\{0\}$, we let $\Phi_y(x)=\Phi(x-y)$.  Since $\Phi_y$ is a harmonic function on $\mathbb{R}^d\backslash\{y\}$,  it is real analytic in $B(0,|y|)$, and we have
(see \cite{ABR})
\begin{equation}\label{l1Lbp10}
\Phi(x-y)=\Phi_y(x)=\sum_{m=0}^{\infty}J_m(x,y)\quad\mbox{for all}\ x\in B(0,|y|),
\end{equation}
where $J_m(\cdot,y)\in\mathcal{H}_m^{(d)}$ for all  $m=0,1,\ldots$, and
the series converges absolutely and uniformly on compact subsets of $B(0,|y|)$. Moreover,  for all $y\in\mathbb{R}^d$ and $m=1,2,\ldots$ we have (see \cite[Corollary 5.34 and its proof]{ABR} and \cite[Eq.~(3.31)]{BKl}) that
\begin{align}\label{l1Lbp11}
|J_m(x,y)|\leq C_dm^{d-2}\left(\frac{4|x|}{3|y|}\right)^m\Phi\left(\frac{y}{4}\right) \qtx{for all} x\in\mathbb{R}^d.
\end{align}
Setting $\Phi_{y,N}(x)=\sum_{m=0}^NJ_m(x,y)\in\mathcal{H}_{\leq N}^{(d)}$, it follows that for $x\in\overline{B\left(0,\frac{1}{2}|y|\right)}$ we have
\begin{equation}\label{l1Lbp12}
|\Phi_y(x)-\Phi_{y,N}(x)|\leq C_d(N+1)^{d-2}\left(\frac{4|x|}{3|y|}\right)^{N+1}\Phi\left(\frac{y}{4}\right).
\end{equation}

We now proceed by induction. We set $\mathcal{E}_{-1}=\mathcal{E}_0$ and $\mathcal{H}_{-1}^{(d)}=\{0\}$. We define $Y_{-1}:\mathcal{E}_{-1}(\Omega)\rightarrow\mathcal{H}_{-1}^{(d)}$ by $Y_{-1}\phi=0$ for all $\phi\in\mathcal{E}_{-1}$.  The theorem holds for $N=-1$ from the elliptic regularity estimate \eqref{l1Lbp1}.

We now let $N\in\mathbb{N}_0$ and suppose that the lemma is valid for $N-1$. If $\phi\in\mathcal{E}_N$, it follows that $\phi\in\mathcal{E}_{N-1}$ with $Y_{N-1}\phi=0$, so by the induction hypothesis
\begin{gather}\label{l1Lbp13}
|\phi(x)|\leq C_N\|\phi(x)\|_{\mathrm{L}^2(\Omega)}|x|^N\quad\mbox{for all}\ \overline{B\left(0,\tfrac{1}{2}\right)},\\
\label{l1Lbp14}
\noindent{\text{where}} \quad C_N=\tilde{C}_{d,p,W_p}^{N+1}\left(\tfrac{16}{3}\right)^{\frac{N(N+1)}{2}}(N!)^{d-2}.
\end{gather}
Using \eqref{l1Lbp11} and \eqref{l1Lbp13}, we define
\begin{equation}\label{l1Lbp15}
\psi_N(x)=-\int_{\Omega'}W(y)\phi(y)\Phi_{y,N}(x)dy\in\mathcal{H}_{\leq N}^{(d)}.
\end{equation}
We fix $x\in\overline{B\left(0,\tfrac{1}{2}\right)}$  and estimate
\begin{equation}\label{l1Lbp16}
|\psi(x)-\psi_N(x)|\leq W_p\left(\int_{\Omega'}(|\phi(y)||\Phi_{y,>N}(x)|)^qdy\right)^{\frac{1}{q}},
\end{equation}
where  $\Phi_{y,>N}(x)=\Phi_y(x)-\Phi_{y,N}(x)$.
From  \eqref{l1Lbp12} and \eqref{l1Lbp13}, with $p>d$, we get
\begin{align}\label{l1Lbp17}
&\left(\int_{\overline{B\left(0,\frac{1}{2}\right)}\backslash B(0,2|x|) }(|\phi(y)||\Phi_{y,>N}(x)|)^qdy\right)^{\frac{1}{q}}
\\&\leq C_dC_N\|\phi\|_{\mathrm{L}^2(\Omega)}(N+1)^{d-2}\left(\tfrac{4}{3}\right)^{N+1}\!\!|x|^{N+1}\!\!\left(\int_{\overline{B\left(0,\frac{1}{2}\right)}\backslash B(0,2|x|)}\left(\tfrac{1}{|y|}\Phi\left(\tfrac{y}{4}\right)\right)^qdy\right)^{\frac{1}{q}}
\nonumber\\&\leq C_{d,p}C_N\|\phi\|_{\mathrm{L}^2(\Omega)}(N+1)^{d-2}\left(\tfrac{4}{3}\right)^{N+1}|x|^{N+1}.\nonumber
\end{align}
If $y\not\in B(0,2|x|)\cup\overline{B\left(0,\tfrac{1}{2}\right)}$ we have $y\geq2|x|$ and $y\geq\tfrac{1}{2}$, and hence, using \eqref{l1Lbp12},
\begin{align}\label{l1Lbp18}
&\left(\int_{\Omega'\backslash\left(B(0,2|x|)\cup\overline{B\left(0,\tfrac{1}{2}\right)}\right)}(|\phi(y)||\Phi_{y,>N}(x)|)^qdy\right)^{\frac{1}{q}}
\\& \qquad \qquad \quad\leq C_d(N+1)^{d-2}\left(\tfrac{8}{3}\right)^{N+1}\Phi\left(\tfrac{1}{8}\right)|x|^{N+1}\left(\int_{\Omega'}|\phi(y)|^q\right)^{\frac{1}{q}}
\nonumber\\& \qquad \qquad \quad  \leq C_d(N+1)^{d-2}\left(\tfrac{8}{3}\right)^{N+1}|x|^{N+1}\|\phi\|_{\mathrm{L}^2(\Omega)}.\nonumber
\end{align}
Using \eqref{l1Lbp11} and \eqref{l1Lbp13}, we get
\begin{align}\label{l1Lbp19}
&\left(\int_{B(0,2|x|)\cap\overline{B\left(0,\tfrac{1}{2}\right)}}(|\phi(y)||\Phi_{y,>N}(x)|)^qdy\right)^{\frac{1}{q}}
\\&\leq C_N\|\phi\|_{\mathrm{L}^2(\Omega)}\left(\int_{B(0,2|x|)\cap\overline{B\left(0,\tfrac{1}{2}\right)}}(|y|^N|\Phi_{y,>N}(x)|)^qdy\right)^{\frac{1}{q}}
\nonumber\\&\leq C_N\|\phi\|_{\mathrm{L}^2(\Omega)}\left(\int_{B(0,2|x|)\cap\overline{B\left(0,\tfrac{1}{2}\right)}}(|y|^N|\Phi(x-y)|)^qdy\right)^{\frac{1}{q}}
\nonumber\\&\; \;+C_dC_N\|\phi\|_{\mathrm{L}^2(\Omega)}\!\!\sum_{m=0}^Nm^{d-2}\left(\tfrac{4}{3}|x|\right)^m\!\!\left(\int_{B(0,2|x|)\cap\overline{B\left(0,\tfrac{1}{2}\right)}}\left(|y|^{N-m}\!\left|\Phi\left(\tfrac{y}{4}\right)\right|\right)^qdy\right)^{\frac{1}{q}}
\nonumber\\&\leq C_dC_N\|\phi\|_{\mathrm{L}^2(\Omega)}\left(2^N+N^{d-2}\left(\tfrac{4}{3}\right)^{N+1}\right)|x|^{N+1},\nonumber
\end{align}
where we used $\frac{3|x|}{|x-y|}\geq1 $ for $y\in B(0,2|x|)$. (Note that we get $|x|^{N+2-\frac{d}{p}}$ if $d\geq3$ and $|x|^{\left(N+2-\frac{d}{p}\right)-}$ if\ $d=2$.) Also using \eqref{l1Lbp11}, we get
\begin{align}\label{l1Lbp20}
&\left(\int_{\Omega'\backslash\overline{B\left(0,\tfrac{1}{2}\right)}}(|\phi(y)||\Phi_{y,>N}(x)|)^qdy\right)^{\frac{1}{q}}
\\& \qquad \leq \left(\int_{\Omega'\backslash\overline{B\left(0,\tfrac{1}{2}\right)}}(|\phi(y)||\Phi(x-y)|)^qdy\right)^{\frac{1}{q}}
\nonumber\\&\qquad \qquad+C_d\sum_{m=0}^Nm^{d-2}\left(\tfrac{4}{3}|x|\right)^m\left(\int_{\Omega'\backslash\overline{B\left(0,\frac{1}{2}\right)}}\left(|\phi(y)||y|^{-m}\left|\Phi\left(\tfrac{y}{4}\right)\right|\right)^qdy\right)^{\frac{1}{q}}
\nonumber\\&\qquad \leq C_{d,p,W_p}\|\phi\|_{\mathrm{L}^2(\Omega)}\left(1+N^{d-2}\left(\tfrac{4}{3}\right)^{N+1}\right),\nonumber
\end{align}
where we used $|x|\leq\tfrac{1}{2}$. Since $|x|>\tfrac{1}{4}$ if $y\in B(0,2|x|)\backslash\overline{B\left(0,\tfrac{1}{2}\right)}$, we obtain
\begin{align}\label{l1Lbp21}
&\left(\int_{(\Omega'\cap B(0,2|x|)) \backslash\overline{B\left(0,\frac{1}{2}\right)}}(|\phi(y)||\Phi_{y,>N}(x)|)^qdy\right)^{\frac{1}{q}}
\\&\qquad \qquad \qquad \leq C_{d,p,W_p}\|\phi\|_{\mathrm{L}^2(\Omega)}\left(4^{N+1}+N^{d-2}\left(\tfrac{16}{3}\right)^{N+1}\right)|x|^{N+1}.\nonumber
\end{align}
Combining \eqref{l1Lbp16}, \eqref{l1Lbp17}, \eqref{l1Lbp18}, \eqref{l1Lbp19} and \eqref{l1Lbp21},   we have ($C_N\geq1$)  \begin{equation}\label{l1Lbp22}
|\psi(x)-\psi_N(x)|\leq C_{d,p,W_p}C_NW_p(N+1)^{d-2}|x|^{N+1}\|\phi\|_{\mathrm{L}^2(\Omega)},
\end{equation}
for all $x\in\overline{B\left(0,\frac{1}{2}\right)}$.

Now let  $Y_N\phi=h_N+\psi_N\in\mathcal{H}_N^{(d)}$. It  follows from \eqref{l1Lbp8}, \eqref{l1Lbp22} and \eqref{l1Lbp14}, choosing the constant $\tilde{C}_{d,p,W_p}$ in \eqref{l1Lbp14} large enough, that  for all $x\in\overline{B\left(0,\frac{1}{2}\right)}$ we have
\begin{align}\label{l1Lbp23}
&|\phi(x)-(Y_N\phi)(x)|
\leq|h(x)-h_N(x)|+|\psi(x)-\psi_N(x)|
\nonumber\\&\leq(C_{d,p,W_p}+C_{d,p}W_p C_N)(N+1)^{d-2}\left(\tfrac{16}{3}\right)^{N+1}|x|^{N+1}\|\phi\|_{\mathrm{L}^2(\Omega)}
\nonumber\\&\leq \tilde{C}_{d,p,W_p}C_N(N+1)^{d-2}\left(\tfrac{16}{3}\right)^{N+1}|x|^{N+1}\|\phi\|_{\mathrm{L}^2(\Omega)}
\nonumber\\&\leq \tilde{C}_{d,p,W_p}\left(\tilde{C}_{d,p,W_p}^{N+1}\left(\tfrac{16}{3}\right)^{\frac{N(N+1)}{2}}\!\!(N!)^{d-2}\right)(N+1)^{d-2}\left(\tfrac{16}{3}\right)^{N+1}\!|x|^{N+1}\|\phi\|_{\mathrm{L}^2(\Omega)}
\nonumber\\&\leq \tilde{C}_{d,p,W_p}^{N+2}\left(\tfrac{16}{3}\right)^{\frac{(N+1)(N+2)}{2}}((N+1)!)^{d-2}|x|^{N+1}\|\phi\|_{\mathrm{L}^2(\Omega)}.\nonumber
\end{align}
 This completes the induction.

Since  \eqref{l1Lb2} is a consequence of \eqref{l1Lb1}, and \eqref{l1Lb3} follows from \eqref{l1Lb2}. the theorem is proven.
\end{proof}

Corollary \ref{Lobe} is an immediate consequence from the following corollary.

\begin{corollary}  For $d=2,3,\ldots$,
let $\Omega\subset\mathbb{R}^d$ be an open subset.   Let $B(x_0,r_1)\subset\Omega$ for some $x_0\in\mathbb{R}^d$ and $r_1>0$. Fix a real valued function $W\in\mathrm{L}^p(B(x_0,r_1))$ for some $p\in (d,\infty)$. Suppose $\mathcal{F}$ is a linear subspace of $\mathrm{H}^1(\Omega)$ such that for all $\psi\in\mathcal{F}$ we have  $\Delta\psi\in\mathrm{L}^2(B(x_0,r_1))$ and
\begin{equation}\label{l2Lb1}
\|(-\Delta+W)\psi\|_{\mathrm{L}^{\infty}(B(x_0,r_1))}\leq C_{\mathcal{F}}\|\psi\|_{\mathrm{L}^2(\Omega)}.
\end{equation}
Then there exists $0<r_2=r_2(d,p,W_p)<r_1$, where $W_p=\|W\|_{\mathrm{L}^p(B(x_0,r_1))}$, with the property that for all $r\in(0,r_2]$ there is a linear map $Z_r:\mathcal{F}\rightarrow\mathcal{E}_0(B(x_0,r))$ such that
\begin{equation}\label{l2Lb2}
\|\psi-Z_r\psi\|_{\mathrm{L}^{\infty}(B(x_0,r))}\leq C_{d,r}C_{\mathcal{F}}\|\psi\|_{\mathrm{L}^2(\Omega)}, \quad\mbox{where}\ \lim_{r\rightarrow0}C_{d,r}=0.
\end{equation}
As a consequence, for all $N\in\mathbb{N}$ there is a vector subspace $\mathcal{F}_N$ of $\mathcal{F}$, with
\begin{equation}\label{l2Lb3}
\dim\mathcal{F}_N\geq\dim\mathcal{F}-\gamma_dN^{d-1},
\end{equation}
such that for all $\psi\in\mathcal{F}_N$ we have
\begin{equation}\label{l2Lb4}
|\psi(x)|\leq(C_{d,p,W_p,r_1}^{N^2}|x-x_0|^{N+1}+C_{\mathcal{F}})\|\psi\|_{\mathrm{L}^2(\Omega)}\quad\mbox{for all}\ x\in \overline{B(x_0,\tfrac{r_2}{6})}.
\end{equation}
\end{corollary}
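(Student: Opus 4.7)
The plan is to produce, from each $\psi\in\mathcal F$, an honest solution of $(-\Delta+W)u=0$ on a small ball $B(x_0,r)$ that differs from $\psi$ by a controlled amount, and then apply Theorem~\ref{la1Lobe} to the resulting subspace of $\mathcal E_0(B(x_0,r))$. Given $\psi\in\mathcal F$, set $f_\psi:=(-\Delta+W)\psi\in\mathrm L^\infty(B(x_0,r_1))$, so that $\|f_\psi\|_{\mathrm L^\infty}\le C_{\mathcal F}\|\psi\|_{\mathrm L^2(\Omega)}$ by \eqref{l2Lb1}. I would construct a radius $r_2=r_2(d,p,W_p)\in(0,r_1)$ small enough that for every $r\in(0,r_2]$ the Dirichlet problem $(-\Delta+W)v_r=f_\psi$ on $B(x_0,r)$ has a unique solution $v_r\in\mathrm H^1_0(B(x_0,r))$, and set $Z_r\psi:=\psi-v_r\in\mathcal E_0(B(x_0,r))$, which is linear in $\psi$. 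Existence comes from Lax--Milgram: for $u\in\mathrm H^1_0(B(x_0,r))$, H\"older combined with a scaled Sobolev--Poincar\'e inequality gives $|\int Wu^2|\le Cr^{(2p-d)/p}\|W\|_{\mathrm L^p(B(x_0,r))}\|\nabla u\|_{\mathrm L^2}^2$ (with $(2p-d)/p>0$ since $p>d$), and $\|W\|_{\mathrm L^p(B(x_0,r))}\to 0$ as $r\to 0$ by absolute continuity, so the form $\int(\nabla u\cdot\nabla v+Wuv)$ is coercive for small $r$.

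To establish \eqref{l2Lb2}, represent $v_r(x)=\int_{B(x_0,r)}G_r(x,y)[f_\psi(y)-W(y)v_r(y)]\,dy$ using the Dirichlet Green's function $G_r$ of $-\Delta$ on $B(x_0,r)$, pointwise bounded by the Newton kernel $\Phi(x-y)$. Using $\sup_x\int G_r(x,y)\,dy\le Cr^2$ together with the same H\"older--Sobolev estimate behind coercivity, a fixed-point iteration (equivalently, standard $\mathrm L^\infty$ elliptic regularity for $-\Delta+W$ with $W\in\mathrm L^p$, $p>d$) produces $\|v_r\|_{\mathrm L^\infty(B(x_0,r))}\le C_{d,r}\|f_\psi\|_{\mathrm L^\infty}$ with $C_{d,r}\to 0$ as $r\to 0$; this gives \eqref{l2Lb2}.

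For the dimension bound, fix $r=r_2$, set $\mathcal J:=Z_{r_2}(\mathcal F)\subset\mathcal E_0(B(x_0,r_2))$, and define $\mathcal F_N:=Z_{r_2}^{-1}(\mathcal E_{N+1}(B(x_0,r_2)))$. Since $\ker Z_{r_2}\subset\mathcal F_N$ and $Z_{r_2}|_{\mathcal F_N}$ surjects onto $\mathcal J\cap\mathcal E_{N+1}$, a dimension count with \eqref{l1Lb3} yields $\dim\mathcal F_N\ge\dim\mathcal F-\gamma_dN^{d-1}$. For $\psi\in\mathcal F_N$, apply Theorem~\ref{la1Lobe} to $Z_{r_2}\psi\in\mathcal E_{N+1}(B(x_0,r_2))$ with $r_0=r_2/3$; by \eqref{l1Lb2} one has $Y_N^{(B(x_0,r_2))}(Z_{r_2}\psi)=0$, so \eqref{l1Lb1} gives $|Z_{r_2}\psi(x)|\le C_{d,p,W_p,r_1}^{N^2}|x-x_0|^{N+1}\|Z_{r_2}\psi\|_{\mathrm L^2(B(x_0,r_2))}$ for $x\in\overline{B(x_0,r_2/6)}$, after absorbing the prefactor $r_0^{-d/2}\tilde C^{N+2}(16/3)^{(N+1)(N+2)/2}((N+1)!)^{d-2}$ into the form $C^{N^2}$ (elementary, since $((N+1)!)^{d-2}\le e^{O(N\log N)}$ and $(16/3)^{(N+1)(N+2)/2}=e^{O(N^2)}$ are both dominated by $C^{N^2}$ for $C$ large). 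Combining the triangle inequality $|\psi(x)|\le|v_{r_2}(x)|+|Z_{r_2}\psi(x)|$, the crude bound $\|Z_{r_2}\psi\|_{\mathrm L^2(B(x_0,r_2))}\le\|\psi\|_{\mathrm L^2(\Omega)}+r_2^{d/2}\|v_{r_2}\|_{\mathrm L^\infty}$, and \eqref{l2Lb2} with $r_2$ chosen so that $C_{d,r_2}\le 1$ delivers \eqref{l2Lb4}.

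The principal obstacle is the $\mathrm L^\infty$ estimate for $v_r$: the classical maximum-principle argument of \cite{BKl} valid for bounded $W$ is unavailable, and one must combine Green's-function bounds with the smallness of $\|W\|_{\mathrm L^p(B_r)}$ on small balls, both to secure existence of $v_r$ and to extract a prefactor $C_{d,r}$ that vanishes with $r$, paralleling how the induction of Theorem~\ref{la1Lobe} itself had to be adapted from the $\mathrm H^2$ setting of \cite{BKl} to the $\mathrm H^1$ setting forced by singular $W$.
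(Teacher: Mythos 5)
Your proposal is correct and follows essentially the same route as the paper: define $Z_r\psi$ via a Dirichlet problem for $-\Delta+W$ on the small ball (your $\psi-v_r$ with $v_r\in\mathrm H^1_0$ solving $(-\Delta+W)v_r=(-\Delta+W)\psi$ is the same map as the paper's solution with boundary trace $\psi$), bound $\|\psi-Z_r\psi\|_\infty$ by a Green's-function argument that absorbs the singular-potential term for $r$ small, and then run the subspace/dimension count from Theorem~\ref{la1Lobe} on $\Ran Z_{r_2}$; the paper justifies solvability by citing Trudinger's theorem where you invoke Lax--Milgram, but these rest on the same coercivity computation. One small caution: drop the appeal to absolute continuity of $\|W\|_{\mathrm L^p(B(x_0,r))}$ as $r\to 0$ — it is not needed and, if relied upon, would make $r_2$ depend on $W$ and not merely on $W_p$ as the statement requires; the geometric factor $r^{(2p-d)/p}$ coming from the scaled Sobolev--H\"older estimate already supplies the smallness with the correct $r_2=r_2(d,p,W_p)$ dependence, exactly as in the paper's choice of $r_2$ so that $C_d r^{d(\alpha_d-q)/(\alpha_d q)}(1+W_p)\le\tfrac12$.
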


\begin{proof}
We proceed as in  \cite[Lemma~3.3]{BKl}. It suffices to consider $x_0=0$. We set $B_r=B(0,r)$. Given $0<r<r_1$ and $\psi\in \mathrm{H}^1(\Omega)$ with  $\Delta\psi\in\mathrm{L}^2(B_r)$, we define $Z_r\psi\in\mathcal{E}_0(B_r)$ as the unique solution $\phi\in\mathrm{H}^1(B_r)$ to the Dirichlet problem on $B_r$ given by
\begin{equation}\label{l2Lbp1}
\left\{
\begin{array}{ll}
-\Delta\phi+W\phi=0&\mbox{on}\ B_r,\\
\phi=\psi&\mbox{on}\ \partial B_r.
\end{array}
\right.
\end{equation}
This map is well defined in view of \cite[Theorem 3.2]{Tr}.  (Since $W\in\mathrm{L}^p(B_r)$ for some $p\in (d,\infty)$, $|W|$ is compactly bounded on $H_0^1(B_r)$ by \cite[Lemma~1.4]{Tr}.  Moreover, for  $\psi\in\mathrm{H}^1(\Omega)$ with  $\Delta\psi\in\mathrm{L}^2(B_r)$ we have $\|\nabla\psi\|^2_{\mathrm{L}^2(B_r)}+\int_{B_r}|W|\abs{\psi}^2dx<\infty$, as in \cite[Eq. (2.21) and (2.46)]{KT}. Therefore \cite[Theorem 3.2]{Tr} can be applied.) It is clearly a linear map.

To prove \eqref{l2Lb2}, we use the Green's function $G_r(x,y)$ for the ball $B_r$ (see \cite[Section 2.5]{GiT}),
\begin{equation}\label{l2Lbp2}
G_r(x,y)=\left\{
\begin{array}{ll}
\Phi(|x-y|)-\Phi(\frac{|y|}{r}|x-\frac{r^2}{|y|^2}y|)&\mbox{if}\ y\neq0,\\
\Phi(|x|)-\Phi(r)&\mbox{if}\ y=0.
\end{array}
\right.
\end{equation}
Let $\psi\in\mathcal{F}$.  Using Green's representation formula \cite[Eq. (2.21)]{GiT} for $\psi$ and $Z_r\psi$, for all $x\in B_r$ we have
\begin{align}\label{l2Lbp3}
\psi(x)=&-\int_{\partial B_r}\psi(\zeta)\partial_{\nu}G_r(x,\zeta)dS(\zeta)-\int_{B_r}W(y)\psi(y)G_r(x,y)dy\\&\quad+\int_{B_r}\pa{(-\Delta+W)\psi}\!(y)\, G_r(x,y)dy,
\\ (Z_r\psi)(x)=&-\int_{\partial B_r}\psi(\zeta)\partial_{\nu}G_r(x,\zeta)dS(\zeta)-\int_{B_r}W(y)(Z_r\psi)(y)G_r(x,y)dy,\nonumber
\end{align}
where $dS$ denotes the surface measure and $\partial_{\nu}$ is the normal derivative.  For all $x\in B_r$ an explicit calculation gives
\begin{align}\label{l2Lbp4}
\|G_r(x,\cdot)\|_{\mathrm{L}^1(B_r)}&\leq C'_dr^{\frac{d(\alpha_d-1)}{\alpha_d}}\|G_r(x,\cdot)\|_{\mathrm{L}^{\alpha_d}(B_r)}\leq C_dr^{\frac{d(\alpha_d-1)}{\alpha_d}},\\
\label{l2Lbp5}
\|G_r(x,\cdot)\|_{\mathrm{L}^q(B_r)}&\leq C'_dr^{\frac{d(\alpha_d-q)}{\alpha_dq}}\|G_r(x,\cdot)\|_{\mathrm{L}^{\alpha_d}(B_r)}\leq C_dr^{\frac{d(\alpha_d-q)}{\alpha_dq}},
\end{align}
where  $\alpha_2=2$ and $\alpha_d=\frac{d-1}{d-2}$ for $d\geq3$, and  $\tfrac{1}{p}+\tfrac{1}{q}=1$ ($q<\frac{d}{d-1}\leq\alpha_d$ as $p>d$). We conclude that
\begin{align}\label{l2Lbp6}
&\|\psi-Z_r\psi\|_{\mathrm{L}^{\infty}(B_r)}  \\  \notag
& \qquad \leq C_dr^{\frac{d(\alpha_d-q)}{\alpha_dq}}W_p\|\psi-Z_r\psi\|_{\mathrm{L}^{\infty}(B_r)}+C_dr^{\frac{d(\alpha_d-1)}{\alpha_d}}\|(-\Delta+W)\psi\|_{\mathrm{L}^{\infty}(B_r)}.
\end{align}
Taking $r_2\in(0,r_1)$ such that $C_dr^{\frac{d(\alpha_d-q)}{\alpha_dq}}(1+W_p)\leq\tfrac{1}{2}$, and using \eqref{l2Lb1}, we get \eqref{l2Lb2}.

Letting  $\mathcal{J}=\Ran Z_{r_2}$, and setting $\mathcal{J}_N=\mathcal{J}\cap\mathcal{E}_{N+1}(B_{r_2})$, $\mathcal{F}_N=Z_{r_2}^{-1}(\mathcal{J}_N)$,  the estimate  \eqref{l2Lb4} follows  using the  argument in \cite[Lemma~3.3]{BKl}.
\end{proof}

\section{Bounds on the Density of States of Schr\"{o}dinger Operators with Singular Potentials}

\subsection{One-dimensional Schr\"{o}dinger operators with singular potentials}

The case $d=1$ of Theorem \ref{bdsp} is an immediate consequence of the following theorem.
\begin{theorem}\label{thm1d}
Let $H=-\Delta+V$ on\ $\mathrm{L}^2(\mathbb{R})$, where $V$ is a real potential such that\begin{equation}\label{t1d1}
\sup_{x\in\mathbb{R}}\int_{\{|x-y|\leq1\}}|V(y)|dy<\infty.
\end{equation}
Given $E_0\in\mathbb{R}$, there exists $L_{V,E_0}$ such that for all $0<\varepsilon\leq\frac{1}{2}$, open intervals $\Lambda=\Lambda_L$ with $L\geq L_{V,E_0}\log\frac{1}{\varepsilon}$, and $E\leq E_0$, we have
\begin{equation}\label{t1d2}
\eta_{\Lambda}([E,E+\varepsilon])\leq\frac{C_{V,E_0}}{\log\frac{1}{\varepsilon}}.
\end{equation}
\end{theorem}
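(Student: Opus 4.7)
Corollary~\ref{Lobe} is unavailable in dimension one because the harmonic polynomial decomposition degenerates: the $1$D replacement is simply that the homogeneous ODE $-u''+(V-E)u=0$ has a $2$-dimensional solution space. The plan is to carry out the codimension argument used for $d\ge 2$ directly in this $2$-dimensional setting, combined with the quantitative unique continuation principle of \cite{KT}.

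First I would fix an open interval $\Lambda=\Lambda_L$, pick an orthonormal basis $\{\phi_j\}$ of $\mathrm{L}^2(\Lambda)$ consisting of Dirichlet eigenfunctions of $H_\Lambda$ with eigenvalues $E_j$, set $K=\#\{j:E_j\in[E,E+\varepsilon]\}$ so that $\eta_\Lambda([E,E+\varepsilon])=K/L$, and let $\mathcal{F}:=\mathrm{span}\{\phi_j:E_j\in[E,E+\varepsilon]\}$. For any $\psi=\sum c_j\phi_j\in\mathcal{F}$ with $\sum|c_j|^2=1$ one has $\|(-\Delta+V-E)\psi\|_{\mathrm{L}^2(\Lambda)}\le\varepsilon$, and using the uniform local $\mathrm{L}^1$ hypothesis \eqref{t1d1} together with the $1$D Sobolev embedding $\mathrm{H}^1\hookrightarrow\mathrm{L}^\infty$, I would derive a uniform bound $\|\psi\|_{\mathrm{L}^\infty(\Lambda)}\le C_{V,E_0}$.

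Next I would fix an interior point $x_0\in\Lambda$ and a radius $r>0$ (to be chosen depending only on $V$ and $E_0$) with $I:=(x_0-r,x_0+r)\subset\Lambda$, and consider the linear map $T:\mathcal{F}\to\mathbb{R}^2$, $T\psi=(\psi(x_0),\psi'(x_0))$, which is well-defined since $\mathcal{F}\subset\mathrm{H}^2_{\mathrm{loc}}(\Lambda)\hookrightarrow C^1(\Lambda)$ in dimension one. Then $\dim\ker T\ge K-2$. For $\psi\in\ker T$ normalized by $\|\psi\|_{\mathrm{L}^2(\Lambda)}=1$, I would rewrite $-\psi''+(V-E)\psi=f$ (where $\|f\|_{\mathrm{L}^2(\Lambda)}\le\varepsilon$) as a first-order system, apply variation of parameters with zero data at $x_0$, and bound the fundamental matrix by a Gronwall estimate using \eqref{t1d1}, obtaining $\|\psi\|_{\mathrm{L}^2(I)}\le C(r,V,E_0)\,\varepsilon$. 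On the other hand, the quantitative unique continuation principle \cite[Theorem~1.1]{KT}, or a $1$D ODE-based version, yields a lower bound of the form $\|\psi\|_{\mathrm{L}^2(I)}\ge e^{-\alpha L}\|\psi\|_{\mathrm{L}^2(\Lambda)}=e^{-\alpha L}$, with $\alpha=\alpha(V,E_0,r)$.

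Combining the two bounds gives $\varepsilon\ge c\,e^{-\alpha L}$, which is violated as soon as $L\ge L_{V,E_0}\log(1/\varepsilon)$ for $L_{V,E_0}$ chosen sufficiently large. Hence for such $L$ one must have $K\le 2$, so $\eta_\Lambda([E,E+\varepsilon])=K/L\le 2/L\le C_{V,E_0}/\log(1/\varepsilon)$, which is \eqref{t1d2}. The hard part will be compatibility of constants: one must track the dependence of both the Duhamel bound and the QUCP constant $\alpha$ on the low regularity of $V$—only uniformly locally $\mathrm{L}^1$—and in particular ensure that the QUCP of \cite{KT} (stated for $\mathrm{L}^p$ potentials with $p>d$) can be invoked in this $1$D setting, perhaps via a splitting $V=V_{\mathrm{b}}+V_{\mathrm{s}}$ with $V_{\mathrm{b}}\in\mathrm{L}^\infty$ and $V_{\mathrm{s}}$ of compact support, or through a direct ODE-based alternative exploiting the one-dimensional structure.
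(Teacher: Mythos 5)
There is a fatal logical error in the step where you combine the Gronwall upper bound with the QUCP lower bound. From $\|\psi\|_{\mathrm{L}^2(I)}\le C\varepsilon$ and $\|\psi\|_{\mathrm{L}^2(I)}\ge e^{-\alpha L}$ you correctly get the compatibility condition $\varepsilon\ge c\,e^{-\alpha L}$, but this inequality is \emph{satisfied}, not violated, as soon as $L\gtrsim\log\frac{1}{\varepsilon}$: making $L$ large only makes $e^{-\alpha L}$ smaller and the condition easier to meet. A contradiction (hence $\ker T=\{0\}$) would follow only when $L$ is \emph{small}, which produces $K\le 2$, $\rho\le 2/L$, a useless bound. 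The conclusion you aim for---$K\le 2$ for large $L$---is in fact false: already for $V=0$ on $(0,L)$ with Dirichlet conditions, the number of eigenvalues in $[E,E+\varepsilon]$ grows linearly in $L$, so the dimension count $\dim\ker T\ge K-2$ cannot be reduced to zero by any choice of $L_{V,E_0}$. Imposing vanishing Cauchy data at a \emph{single} point controls $\psi$ only on a neighborhood of that point, and no global unique continuation inequality (whose constant is at best $e^{-\alpha L^\beta}$ and hence exponentially small) can rescue the argument. In addition, \cite[Theorem~1.1]{KT} is stated only for $d\ge2$, so even the invocation requires a separate $d=1$ result not provided.

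The paper's proof avoids QUCP entirely in $d=1$. It imposes vanishing Cauchy data at \emph{every} point $a_j=a_0+jR$, $j=1,\dots,\lceil L/R\rceil-1$, so the codimension of the subspace $\mathcal{F}_R\subset\Ran P$ is about $2L/R$, and the Gronwall estimate then gives smallness of $\psi$ on \emph{each} of the subintervals $(a_j-R,a_j+R)$, hence $\|\psi\|_\infty\le e^{CR}\sqrt{R}\,\varepsilon\|\psi\|_2$ globally. The matching lower bound is supplied not by a Carleman/QUCP inequality but by the elementary pigeonhole estimate \cite[Lemma~2.1]{BKl}, which produces $\psi_0\in\mathcal{F}_R$ with $\|\psi_0\|_\infty\ge\sqrt{\dim\mathcal{F}_R/L}\,\|\psi_0\|_2$. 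The spacing $R$ is then chosen \emph{adaptively} as $R=4/\rho$ so that $\dim\mathcal{F}_R\ge\tfrac12\rho L$, and the resulting self-improving inequality $\sqrt{\rho/2}\le e^{4C/\rho}\sqrt{4/\rho}\,\varepsilon$ yields $\rho\lesssim 1/\log\frac{1}{\varepsilon}$. The adaptive choice of many data points (rather than one) and the use of the dimension--$\mathrm{L}^\infty$ lemma are the two ideas missing from your sketch.
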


\begin{proof}
Proceeding as in  \cite[Theorem~2.3]{BKl},
let $\Lambda=\Lambda_L=(a_0,a_0+L)$, $E\in\mathbb{R}$, $\varepsilon\in(0,\frac{1}{2}]$ and
\begin{equation}\label{t1dp1}
K=\sup_{x\in\mathbb{R}}\int_{\{|x-y|\leq1\}}|V(y)|dy<\infty.
\end{equation}
Setting  $P=\chi_{[E,E+\varepsilon]}(H_{\Lambda})$, we have $\Ran P\subset\mathcal{D}(H_{\Lambda})\subset\mathrm{C}^1(\Lambda)$, and
\begin{equation}\label{t1dp2}
\|(H_{\Lambda}-E)\psi\|_2\leq\varepsilon\|\psi\|_2\quad\mbox{for all}\ \psi\in\Ran P.
\end{equation}

Given $0<R<L$, set $a_j=a_0+jR$ for $j=1,2,\ldots,\left\lceil\tfrac{L}{R}\right\rceil-1$, and consider  the vector space
\begin{equation}\label{t1dp3}
\mathcal{F}_R:=\left\{\psi\in\Ran P:\psi(a_j)=\psi'(a_j)=0\quad\mbox{for}\ j=1,2,\ldots,\left\lceil\tfrac{L}{R}\right\rceil-1\right\}.
\end{equation}
Given\ $\psi\in\mathcal{F}_R$, set $\Psi=\left(
\begin{array}{c}
\psi\\\psi'
\end{array}
\right)$.
We have
\begin{equation}\label{t1dp5}
\Psi'=\left(
\begin{array}{c}
\psi'\\\psi''
\end{array}
\right)=\left(
\begin{array}{c}
\psi'\\V\psi-H\psi
\end{array}
\right)=\left(
\begin{array}{cc}
0&1\\V-E&0
\end{array}\right)\Psi+
\left(\begin{array}{c}
0\\-\zeta
\end{array}\right)
\end{equation}
where $\zeta=(H-E)\psi$. We have $\|\zeta\|_2\leq\varepsilon\|\psi\|_2$ from \eqref{t1dp2}. For $j=1,2,\ldots,\left\lceil\tfrac{L}{R}\right\rceil-1$ and $x\in(a_j-R,a_j+R)\cap\Lambda$, we have
\begin{equation}\label{t1dp6}
\Psi(x)=\int_{a_j}^x\left(\begin{array}{cc}
0&1\\(V(y)-E)&0
\end{array}\right)\Psi(y)dy+\int_{a_j}^x\left(\begin{array}{c}
0\\-\zeta(y)
\end{array}\right)dy
\end{equation}
since $\psi(a_j)=\psi'(a_j)=0$, and hence
\begin{equation}\label{t1dp7}
|\Psi(x)|\leq\left|\int_{a_j}^x(1+|E|+|V(y)|)|\Psi(y)|)dy+\int_{a_j}^x|\zeta(y)|dy\right|.
\end{equation}
By Gronwall's inequality (see \cite{Ho}), we have
\begin{equation}\label{t1dp8}
|\Psi(x)|\leq\left|\int_{a_j}^x\exp\left(\left|\int_y^x(1+|E|+|V(z)|)dz\right|\right)|\zeta(y)|dy\right|.
\end{equation}
We have
\begin{align}\label{t1dp9}
\left|\int_y^x(1+|E|+|V(z)|)dz\right| &\leq(1+|E|)|x-y|+\left|\int_y^x|V(z)dz|\right|
\\   &  \leq(1+|E|)R+\left\lceil\tfrac{R}{2}\right\rceil K
\leq C\max\{R,1\}, \nonumber
\end{align}
where $C=1+|E|+K$. Therefore
\begin{equation}\label{t1dp10}
|\psi(x)|\leq|\Psi(x)|\leq e^{C\max\{R,1\}}\sqrt{|x-a_j|}\|\zeta\|_2\leq e^{C\max\{R,1\}}\sqrt{R}\varepsilon\|\psi\|_2.
\end{equation}
Since $\Lambda$ is the union of these intervals, we conclude that
\begin{equation}\label{t1dp11}
\|\psi\|_{\infty}\leq e^{C\max\{R,1\}}\sqrt{R}\varepsilon\|\psi\|_2\quad\mbox{for all}\ \psi\in\mathcal{F}_R.
\end{equation}

We now assume that
\begin{equation}\label{t1dp12}
\rho:=\eta_{\Lambda_L}([E,E+\varepsilon])=\tfrac{1}{L}\tr P>\tfrac{4}{L},
\end{equation}
since otherwise there is nothing to prove for large $L$. Taking $R=\tfrac{4}{\rho}$, it follows from\ \eqref{t1dp12}\ that
\begin{equation}\label{t1dp13}
\dim\mathcal{F}_R\geq\rho L-2\left(\left\lceil\tfrac{L}{R}\right\rceil-1\right)\geq\rho L-2\tfrac{L}{R}=\tfrac{1}{2}\rho L>2.
\end{equation}
Applying \cite[Lemma~2.1]{BKl}, we obtain $\psi_0\in\mathcal{F}_R$, $\psi_0\neq0$, such that
\begin{equation}\label{t1dp14}
\|\psi_0\|_{\infty}\geq\sqrt{\frac{\dim\mathcal{F}_R}{L}}\|\psi_0\|_2\geq\sqrt{\tfrac{1}{2}\rho}\|\psi_0\|_2.
\end{equation}
It follows from \eqref{t1dp11} and \eqref{t1dp14} that
\begin{equation}\label{t1dp15}
\sqrt{\tfrac{1}{2}\rho}\leq e^{C\max\{R,1\}}\sqrt{R}\varepsilon=e^{C(\max\{\tfrac{4}{\rho},1\})}\sqrt{\tfrac{4}{\rho}}\varepsilon.
\end{equation}
If $\rho\leq4$, we have  $\tfrac{4}{\rho}\geq1$,  and we get
\begin{equation}\label{t1dp16}
\rho\leq\frac{8C}{\log\frac{1}{\varepsilon}}.
\end{equation}
If $\rho>4$, we have  $\frac{4}{\rho}<1$, and we get
\begin{equation}\label{t1dp17}
\rho\leq 2\sqrt{2}e^C\varepsilon\leq\frac{2\sqrt{2}e^C}{\log\frac{1}{\varepsilon}}.
\end{equation}
Since we have \eq{t1dp12}, we conclude that there exists $C_{K,E}$ such that
\begin{equation}\label{t1dp18}
\rho\leq\frac{C_{K,E}}{\log\frac{1}{\varepsilon}} \qtx{if}  L>\frac{4}{\rho}\geq\frac{4\log\frac{1}{\varepsilon}}{C_{K,E}}.
\end{equation}

Since $H_{\Lambda}$ is semibounded (see \cite{Si}), there exists $\theta_{V}$ such that $\sigma(H_{\Lambda})\subset[\theta_{V},\infty)$. Thus we have $\eta_{\Lambda}([E,E+\varepsilon])=0$ unless $E\geq\theta_{V}-\frac{1}{2}$. Thus, given $E_0\in\mathbb{R}$, there exists $L_{V,E_0}$ such that, for all $0<\varepsilon\leq\frac{1}{2}$, open intervals $\Lambda=\Lambda_L$ with $L\geq L_{V,E_0}\log\frac{1}{\varepsilon}$, and $E\leq E_0$, we have \eqref{t1d2}.
\end{proof}
\subsection{Two and three dimensional Schr\"{o}dinger operators with singular potentials}

We start by recalling a quantitative unique continuation principle for Schr\"{o}dinger operators with singular potentials \cite{KT}, an extension of the bounded potentials results  of \cite{BK,GKloc,BKl}.  We state   only what we  use in the proof of  Theorem \ref{bdsp}.
Given subsets $A$ and $B$ of $\mathbb{R}^d$, and a function $\varphi$ on set $B$, we set  $\varphi_A:=\varphi\chi_{A\cap B}$. We let $\varphi_{x,\delta}:=\varphi_{B(x,\delta)}$.

\begin{theorem}[{\cite[Theorem~1.1]{KT}}]\label{qucp3d}  Let $d=2,3,\ldots$.
Let $\Omega$ be an open subset of $\mathbb{R}^d$, and consider a real measurable function $V=V^{(1)}+V^{(2)}$ on $\Omega$ with $\|V^{(1)}\|_{\infty}\leq K_1$ and    $\|V^{(2)}\|_p\leq K_2$,  with either  $p\geq d$ if $d\geq3$ or
   $p>2$ if $d=2$.   Set  $K=K_1+K_2$.   Let $\psi\in\mathrm{L}^2(\Omega)$ be real valued with $\Delta \psi\in\mathrm{L}_{loc}^2(\Omega)$, and suppose
\begin{equation}\label{q3d1}
\zeta=-\Delta\psi+V\psi\in\mathrm{L}^2(\Omega).
\end{equation}
Let $\Theta\subset\Omega$ be a bounded measurable set where\ $\|\psi_{\Theta}\|_{2}>0$, and set
\begin{equation}\label{q3d2}
Q(x,\Theta):=\sup_{y\in\Theta}|y-x|\quad\mbox{for}\quad x\in\Omega.
\end{equation}
Consider $x_0\in\Omega\backslash\overline{\Theta}$ such that
\begin{equation}\label{q3d3}
Q=Q(x_0,\Theta)\geq1\quad \mbox{and}\quad B(x_0,6Q+2)\subset\Omega,
\end{equation}
and take
\begin{equation}\label{q3d4}
0<\delta\leq\min\{\dist(x_0,\Theta),\tfrac{1}{2}\}.
\end{equation}
There is a constant $m_d>0$, depending only on $d$, such that\begin{equation}\label{q3d5}
\left(\tfrac{\delta}{Q}\right)^{m_d(1+K^{\frac{2p}{3p-2d}})(Q^{\frac{4p-2d}{3p-2d}}+\log\frac{\|\psi_{\Omega}\|_{2}}{\|\psi_{\Theta}\|_{2}})}\|\psi_{\Theta}\|_{2}^2\leq\|\psi_{x_0,\delta}\|_{2}^2+\delta^2\|\zeta_{\Omega}\|_{2}^2.
\end{equation}
 \end{theorem}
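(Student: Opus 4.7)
The plan is to follow the Carleman-estimate strategy used by Bourgain--Kenig \cite{BK} and refined in \cite{GKloc,BKl}, modified to accommodate singular $\mathrm{L}^p$ potentials. The heart of the matter will be a weighted Carleman inequality with polynomial weight $\rho(x)^{-\tau}$, where $\rho(x)\approx|x-x_0|$, whose right-hand side has enough Sobolev strength to absorb the $V^{(2)}\psi$ term via H\"older's inequality plus Sobolev embedding, rather than the cruder $\mathrm{L}^2$--$\mathrm{L}^2$ absorption available when $V\in\mathrm{L}^\infty$. The exponents $\tfrac{4p-2d}{3p-2d}$ and $\tfrac{2p}{3p-2d}$ in \eqref{q3d5} will emerge from balancing the Carleman parameter $\tau$ against this Sobolev constraint; formally letting $p\to\infty$ one recovers the bounded-potential exponents $\tfrac{4}{3}$ and $\tfrac{2}{3}$ of \cite{BK}.

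First I would establish, for $\tau$ sufficiently large and $u\in\mathrm{C}_c^\infty(B(0,R)\setminus\{0\})$, an estimate of the schematic form
\begin{equation*}
\tau^3\int \rho^{-1-2\tau}|u|^2\,dx + \tau\int \rho^{1-2\tau}|\nabla u|^2\,dx \le C\int \rho^{3-2\tau}|\Delta u|^2\,dx + (\text{mixed-norm piece}),
\end{equation*}
where the mixed-norm piece is designed so that with $q=\tfrac{2p}{p-2}$ one has $\|Vu\|_{\mathrm{L}^2}\le K_1\|u\|_{\mathrm{L}^2}+K_2\|u\|_{\mathrm{L}^q}$, and the $\mathrm{L}^q$ contribution can be absorbed on the left via Sobolev embedding $\mathrm{H}^1\hookrightarrow\mathrm{L}^q$. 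This embedding is permitted precisely under the hypothesis $p\ge d$ if $d\ge 3$ or $p>2$ if $d=2$. Balancing the $\tau$-scaling of the weight against the Sobolev scaling on the absorption side is what pins down the appearance of $3p-2d$ in the denominators of the exponents.

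Next, applied to $u=\eta\psi$ with $\eta$ a smooth cutoff supported in the annular region $\{\delta/2\le|x-x_0|\le 3Q+1\}$ and identically $1$ on an intermediate annulus containing $\Theta$, the Carleman inequality produces three contributions when one substitutes $\Delta\psi=V\psi-\zeta$: an inhomogeneous term of size $\|\zeta_\Omega\|_2$, a small-scale commutator near $|x-x_0|\sim\delta$ contributing $\|\psi_{x_0,\delta}\|_2$, and a large-scale commutator near $|x-x_0|\sim Q$ contributing $\|\psi_\Omega\|_2$. Since $\rho^{-2\tau}$ is largest near $x_0$ and smallest near the outer boundary, this yields a relation schematically of the form
\begin{equation*}
\delta^{-2\tau}\|\psi_\Theta\|_2^2 \le C_\tau\Bigl(\delta^{-2\tau}\|\psi_{x_0,\delta}\|_2^2 + Q^{-2\tau}\|\psi_\Omega\|_2^2 + e^{C(1+K^a)Q^b}\delta^2\|\zeta_\Omega\|_2^2\Bigr),
\end{equation*}
with $a=\tfrac{2p}{3p-2d}$ and $b=\tfrac{4p-2d}{3p-2d}$ forced by the Sobolev-absorption scaling.

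Finally I would optimize in $\tau$: choose $\tau$ just large enough that $(Q/\delta)^{2\tau}$ dominates the ratio $\|\psi_\Omega\|_2^2/\|\psi_\Theta\|_2^2$, absorb the $\|\psi_\Omega\|_2$ term into the left-hand side, and read off the stated bound. The threshold $\tau\sim(1+K^a)(Q^b+\log(\|\psi_\Omega\|_2/\|\psi_\Theta\|_2))$ produced by this optimization is exactly the exponent of $(\delta/Q)$ appearing in \eqref{q3d5}. The main obstacle will be the first step: constructing a Carleman estimate whose right-hand side is strong enough to absorb $V^{(2)}\psi$ without paying more than a controlled power of $\tau$. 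For bounded potentials the standard $\mathrm{L}^2$--$\mathrm{L}^2$ Carleman inequality of \cite{BK} handles this absorption trivially, whereas for $\mathrm{L}^p$ potentials one is forced into a mixed-norm framework reminiscent of Koch--Tataru-type inequalities, and must track the precise dependence of every constant on $\tau$ and $K$ to close the argument with the exponents demanded by \eqref{q3d5}.
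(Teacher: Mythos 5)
This theorem is not proved in the paper you are reading: it is quoted verbatim from \cite[Theorem~1.1]{KT}, and the present paper simply states it and then applies it (via Corollary~\ref{qucpcor}) in the proof of Theorem~\ref{bdsp}. There is therefore no ``paper's own proof'' here for your sketch to be checked against; the comparison you want would have to be made against the proof in \cite{KT} itself.

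That said, a few remarks on the plausibility of your outline. The overall Carleman--cutoff--optimize-in-$\tau$ architecture you describe is indeed the one that \cite{BK}, \cite{GKloc}, and \cite{BKl} use for the bounded case, and your consistency checks are correct: with $q=\tfrac{2p}{p-2}$ the embedding $\mathrm{H}^1\hookrightarrow\mathrm{L}^q$ needed for the H\"older step requires $\tfrac{2p}{p-2}\le\tfrac{2d}{d-2}$, i.e.\ $p\ge d$ when $d\ge3$ (and any $p>2$ when $d=2$), exactly matching the hypotheses of the theorem, and the exponents $\tfrac{4p-2d}{3p-2d}$ and $\tfrac{2p}{3p-2d}$ do degenerate to the Bourgain--Kenig exponents $\tfrac{4}{3}$ and $\tfrac{2}{3}$ as $p\to\infty$. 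What I cannot verify without \cite{KT} in hand is whether their actual mechanism for absorbing $V^{(2)}\psi$ is the single mixed-norm Carleman estimate you propose, or instead a two-stage argument (e.g.\ an elliptic-regularity or form-boundedness step to upgrade $\psi$ and reduce to the bounded framework, as the citations to \cite[Eq.~(2.21) and (2.46)]{KT} elsewhere in this paper might suggest). Your sketch is also deliberately schematic in the one place you yourself flag as the main obstacle --- establishing a Carleman inequality with $\tau$-uniform absorption of the $\mathrm{L}^p$ potential and then tracking every constant's dependence on $\tau$ and $K$ --- so as written it is a credible research plan rather than a proof. If you want an actual comparison, you should pull \cite{KT}, arXiv:1408.1992, and line your steps up against theirs.
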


As noted in \cite[Corollary A.2]{GKloc}, when we apply Theorem \ref{qucp3d} to approximate eigenfunction of Schr\"{o}dinger operators defined on a box $\Lambda$ with Dirichlet or periodic boundary condition, it can be extended to sites near the boundary of $\Lambda$ as in the following corollary.

\begin{corollary}\label{qucpcor} Let $d=2,3,\ldots$.
Consider the Schr\"{o}dinger operator $H_{\Lambda}:=-\Delta_{\Lambda}+V$ on $\mathrm{L}^2(\Lambda)$, where $\Lambda=\Lambda_L(x_0)$ is the open box of side $L>0$ centered at $x_0\in\mathbb{R}^d$. $\Delta_{\Lambda}$\ is the Laplacian with either Dirichlet or periodic boundary condition on $\Lambda$, and $V=V^{(1)}+V^{(2)}$ is a real potential on $\Lambda$ with $\|V^{(1)}\|_{\infty}\leq K_1<\infty$ and $\|V^{(2)}\|_p\leq K_2<\infty$, with either  $p\geq d$ if $d\geq3$ or
   $p>2$ if $d=2$. Let $\psi\in\mathcal{D}(H_{\Lambda})$ with $\Delta \psi\in\mathrm{L}^2(\Lambda)$ and fix a bounded measurable set $\Theta\subset\Lambda$ where $\|\psi_{\Theta}\|_{2}>0$. Set
$Q(x,\Theta):=\sup_{y\in\Theta}|y-x|$ for $x\in\Lambda$, and consider $x_0\in\Omega\backslash\overline{\Theta}$ such that $Q=Q(x_0,\Theta)\geq1$. Then, given $0<\delta\leq\min\{\dist(x_0,\Theta),\frac{1}{2}\}$, such that $B(x_0,\delta)\subset\Lambda$, we have
\begin{equation}
\left(\tfrac{\delta}{Q}\right)^{m_d(1+K^{\frac{2p}{3p-2d}})(Q^{\frac{4p-2d}{3p-2d}}+\log\frac{\|\psi\|_{2}}{\|\psi_{\Theta}\|_{2}})}\|\psi_{\Theta}\|_{2}^2\leq\|\psi_{x_0,\delta}\|_{2}^2+\delta^2\|H_{\Lambda}\psi\|_2^2,
\end{equation}
where $K=K_1+K_2$ and $m_d>0$ is a constant depending only on $d$.
\end{corollary}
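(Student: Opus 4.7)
The plan is to reduce to Theorem~\ref{qucp3d} by extending $\psi$, $V$, and $\zeta := H_\Lambda\psi$ from $\Lambda$ to a larger domain $\tilde\Omega$ containing $B(x_0, 6Q+2)$, thereby bypassing the hypothesis $B(x_0, 6Q+2)\subset\Omega$ when $x_0$ lies near $\partial\Lambda$. For periodic boundary conditions I would extend everything periodically to $\mathbb{R}^d$. For Dirichlet boundary conditions I would tile $\mathbb{R}^d$ by reflected copies of $\Lambda$, extending $\psi$ by iterated odd reflection across the faces of $\Lambda$ and extending $V$ by even (sign-preserving) reflection. The vanishing trace of $\psi$ on $\partial\Lambda$ (from $\psi\in\mathcal{D}(H_\Lambda)$) ensures that the resulting $\tilde\psi$ lies in $H^1_{loc}(\mathbb{R}^d)$, and the hypothesis $\Delta\psi\in\mathrm{L}^2(\Lambda)$ then gives, by a distributional integration-by-parts argument, that $\Delta\tilde\psi\in\mathrm{L}^2_{loc}(\mathbb{R}^d)$ is the odd reflection of $\Delta\psi$ and $-\Delta\tilde\psi+\tilde V\tilde\psi=\tilde\zeta$ a.e., where $\tilde\zeta$ is the corresponding extension of $\zeta$.

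Next I would take $\tilde\Omega:=B(x_0, 6Q+3)$ and apply Theorem~\ref{qucp3d} to $(\tilde\psi, \tilde V, \tilde\Omega)$. Since $x_0,\Theta\subset\Lambda$ we have $Q\leq\sqrt{d}\,L$, so $\tilde\Omega$ meets only $N_d=N_d(d)$ reflected copies of $\Lambda$. Hence
\[
\|\tilde\psi_{\tilde\Omega}\|_2\leq\sqrt{N_d}\,\|\psi\|_2,\quad \|\tilde\zeta_{\tilde\Omega}\|_2\leq\sqrt{N_d}\,\|H_\Lambda\psi\|_2,\quad \|\tilde V^{(2)}_{\tilde\Omega}\|_p\leq N_d^{1/p}K_2,
\]
while $\|\tilde V^{(1)}\|_\infty\leq K_1$, $\|\tilde\psi_\Theta\|_2=\|\psi_\Theta\|_2$, and $\|\tilde\psi_{x_0,\delta}\|_2=\|\psi_{x_0,\delta}\|_2$ (the last using $B(x_0,\delta)\subset\Lambda$). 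The output of Theorem~\ref{qucp3d} then differs from the claimed inequality only by the replacement $K\leadsto N_d^{1/p}K$, an additive $\log\sqrt{N_d}$ inside the logarithm, and a factor $N_d$ multiplying $\delta^2\|H_\Lambda\psi\|_2^2$. Because $N_d$ depends only on $d$ and we have $Q\geq 1$ and $\delta/Q\leq 1$, these perturbations can all be absorbed by enlarging the constant $m_d$.

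The main technical obstacle I expect is the rigorous justification of the Dirichlet extension---namely, checking that $\tilde\psi\in H^1_{loc}(\mathbb{R}^d)$ and that $\Delta\tilde\psi\in\mathrm{L}^2_{loc}(\mathbb{R}^d)$ equals the odd reflection of $\Delta\psi$ across each face of $\Lambda$ in the distributional sense. This is a standard but slightly delicate integration-by-parts computation exploiting the vanishing Dirichlet trace of $\psi$; the periodic case is immediate by translation invariance. Once the extension is in place, the rest of the argument is a direct invocation of Theorem~\ref{qucp3d} together with routine bookkeeping of $d$-dependent constants.
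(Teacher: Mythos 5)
Your proposal is correct and follows essentially the same route as the paper: the paper simply cites \cite[Corollary A.2]{GKloc}, whose argument is precisely the periodic/odd-reflection extension of $\psi$, $V$, and $\zeta$ to a larger box $\Lambda_{L'}$ with $L'=(2n+1)L$, followed by an application of Theorem~\ref{qucp3d} and absorption of the $d$-dependent blowup of the norms into the constant $m_d$. The one place you should be slightly careful is the justification that the odd reflection gives $\Delta\tilde\psi\in\mathrm{L}^2_{loc}$ with no distributional contribution on the edges and corners of the box; this is indeed the delicate step you flagged, and it is exactly what \cite[Corollary A.2]{GKloc} handles.
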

This corollary is proved exactly as \cite[Corollary A.2]{GKloc}. (Note that using the notation in the proof of \cite[Corollary A.2]{GKloc}, we have $\|\widehat{V^{(1)}}_{\Lambda_{L'}}\|_{\infty}=\|V^{(1)}_{\Lambda_L}\|_{\infty}$ and $\|\widehat{V^{(2)}}_{\Lambda_{L'}}\|_p\leq(2n+1)^d\|V^{(2)}_{\Lambda_L}\|_p$ if $L'=(2n+1))L$ for some $n\in\mathbb{N}$.)

The case $d=2,3$ of Theorem \ref{bdsp} is an immediate consequence of the following theorem.
\begin{theorem}
Let $H=-\Delta+V$ on $\mathrm{L}^2(\mathbb{R}^d)$, where $d=2,3$ and $V=V^{(1)}+V^{(2)}$ is a real potental with    $V^{(1)}\in\mathrm{L}^{\infty}(\mathbb{R}^d)$ and $V^{(2)}\in\mathrm{L}^p(\mathbb{R}^d)$ with  $p>\frac{2d}{4-d}$. Set $V_{\infty}=\|V\|_{\infty}$ and $V_p=\|V\|_p$. Given $E_0\in\mathbb{R}$, there exists $L_{d,p,V^{(1)}_{\infty},V^{(2)}_p,E_0}$ such that for all $0<\varepsilon\leq\frac{1}{2}$, open boxes $\Lambda=\Lambda_L$ with $L\geq L_{d,p,V_p,E_0}\left(\log\frac{1}{\varepsilon}\right)^{\frac{3p-2d}{8p-4d}}$, and\ $E\leq E_0$, we have
\begin{equation}\label{tmd1}
\eta_{\Lambda}([E,E+\varepsilon])\leq\frac{C_{d,p,V^{(1)}_{\infty},V^{(2)}_p,E_0}}{\left(\log\frac{1}{\varepsilon}\right)^{\frac{(4-d)p-2d}{8p-4d}}}.
\end{equation}
\end{theorem}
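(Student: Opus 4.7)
My plan is to follow the Bourgain--Klein strategy used in Theorem \ref{thm1d}, replacing its Gronwall-based local estimate by the combination of Corollary \ref{Lobe} and the quantitative unique continuation principle Corollary \ref{qucpcor}. Fix $\Lambda = \Lambda_L$ and let $P = \chi_{[E,E+\varepsilon]}(H_\Lambda)$, $\rho = \eta_\Lambda([E,E+\varepsilon]) = L^{-d}\tr P$; we may assume $\rho > c_d L^{-d}$ since otherwise there is nothing to prove. For $\psi \in \Ran P$ we have $\|(H_\Lambda - E)\psi\|_{L^2(\Lambda)} \le \varepsilon \|\psi\|_{L^2(\Lambda)}$, and a preliminary application of elliptic regularity on small balls (as in \eqref{l1Lbp1}), together with absorbing $V^{(1)}$ into the constant, should upgrade this to the sup-norm bound $\|(-\Delta+V-E)\psi\|_{L^\infty(B(x_j,r_1))} \le C_{d,p,V,E_0}\varepsilon\|\psi\|_{L^2(\Lambda)}$ needed as input to Corollary \ref{Lobe}. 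I then cover $\Lambda$ by $M \sim (L/r_1)^d$ balls $B(x_j, r_1/6)$, where $r_1 = r_1(d,p,V_p)$ comes from Corollary \ref{Lobe}, and apply that corollary in each ball to obtain a subspace $\mathcal{F}_N^{(j)} \subset \Ran P$ of codimension at most $\gamma_d N^{d-1}$, on which every $\psi$ satisfies
\[
|\psi(x)| \le \bigl(C^{N^2}|x-x_j|^{N+1} + C_{d,p,V,E_0}\varepsilon\bigr)\|\psi\|_{L^2(\Lambda)}, \quad x \in B(x_j, r_1/6),
\]
with $C = C_{d,p,V_p,r_1}$. Intersecting, $\mathcal{F}_N = \bigcap_j \mathcal{F}_N^{(j)}$ has $\dim \mathcal{F}_N \ge L^d\rho - M\gamma_d N^{d-1}$.

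Choosing $N$ so that $\dim \mathcal{F}_N \ge \tfrac12 L^d\rho$, an $L^\infty$-maximum principle of the type of \cite[Lemma~2.1]{BKl} produces $\psi_0 \in \mathcal{F}_N \setminus \{0\}$ and $x^* \in \Lambda$ with $|\psi_0(x^*)|^2 \gtrsim \rho\,\|\psi_0\|_{L^2(\Lambda)}^2$; interior regularity then gives $\|\psi_{0,\Theta}\|_{L^2}^2 \gtrsim \delta_0^d\rho\,\|\psi_0\|_{L^2(\Lambda)}^2$ for a small ball $\Theta = B(x^*,\delta_0)$. Next, applying Corollary \ref{qucpcor} to $\psi_0$ with source $\Theta$, pivot $x_j$ at distance of order $1$ from $\Theta$ (and thus $Q \lesssim L$), small radius $\delta$, and $K = \|V^{(1)}\|_\infty + \|V^{(2)}\|_p$, and combining with the upper bound on $\|\psi_{0, x_j, \delta}\|_{L^2}^2$ obtained by integrating the Corollary \ref{Lobe} bound over $B(x_j,\delta)$, yields an inequality of the schematic form
\[
\bigl(\tfrac{\delta}{L}\bigr)^{m_d(1+K^{\frac{2p}{3p-2d}})\bigl(L^{\frac{4p-2d}{3p-2d}}+\log(1/\rho)\bigr)} \rho \lesssim \delta^{d+2(N+1)}C^{2N^2} + (\delta^d+\delta^2)\varepsilon^2.
\]

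The final step is to tune $L \sim (\log(1/\varepsilon))^{(3p-2d)/(8p-4d)}$ so that $L^{(4p-2d)/(3p-2d)}$ is of order $\sqrt{\log(1/\varepsilon)}$ (and dominates $\log(1/\rho)$), to take $N$ of order $\log(1/\varepsilon)/\log\log(1/\varepsilon)$ so that the polynomial term $\delta^{d+2(N+1)}C^{2N^2}$ is comparable to the other error terms, and to choose $\delta$ a suitable small inverse power of $\log(1/\varepsilon)$. Taking logarithms in the displayed inequality and solving for $\rho$ then gives $\rho \le C_{d,p,V,E_0}(\log(1/\varepsilon))^{-\kappa_d}$ with $\kappa_d = ((4-d)p-2d)/(8p-4d)$. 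The main obstacle I anticipate is the preliminary $L^2 \to L^\infty$ passage on $(H_\Lambda-E)\psi$: for singular $V$ the spectral subspace lies only in $\mathrm{H}^1$, so an honest sup-norm bound must be obtained through a smooth spectral cutoff $f(H_\Lambda)$ (with $f\equiv 1$ on a bounded neighborhood of $E$) together with mapping properties of $f(H_\Lambda)$ into $L^\infty$. A secondary, computationally delicate task is the precise balancing of parameters so that the quantitative UCP exponents $\frac{2p}{3p-2d}$ and $\frac{4p-2d}{3p-2d}$ produce exactly the exponent $\kappa_d$ advertised in \eqref{tmd1}.
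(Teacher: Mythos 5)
Your overall plan — $L^\infty$-upgrade of the spectral equation, Corollary~\ref{Lobe} for local smallness, Corollary~\ref{qucpcor} to propagate that smallness back, and a parameter balance in $N$, $\delta$, $L$ — is the same skeleton as the paper's proof. But there are two genuine gaps and one structural confusion.

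\textbf{The missing intermediate scale $R$.} You cover $\Lambda$ by $M\sim (L/r_1)^d$ balls of a \emph{fixed} radius $r_1$, so the total codimension loss from $M$ applications of Corollary~\ref{Lobe} is $\sim L^d\gamma_d N^{d-1}$, and the requirement $\dim\mathcal{F}_N\geq\tfrac12 L^d\rho$ forces $N^{d-1}\lesssim\rho$. In the regime of interest $\rho$ is small (indeed $\rho\to 0$ as $\varepsilon\to 0$), so this forces $N<1$ and the local-behavior bound degenerates. Your later suggestion $N\sim\log(1/\varepsilon)/\log\log(1/\varepsilon)$ is incompatible with this constraint. The paper's proof avoids this by covering $\Lambda$ with boxes of an \emph{intermediate} side $R$ that is itself a free parameter (a negative power of $\rho$, ultimately a power of $\log(1/\varepsilon)$): with $\sharp\mathcal{G}\sim(L/R)^d$ applications, one can afford $N\sim(\rho R^d)^{1/(d-1)}$, which is made large by taking $R$ large. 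This intermediate scale is what lets the polynomial-decay term $\delta^{2N}$ compete with the QUCP factor $(\delta/R)^{MR^{(4p-2d)/(3p-2d)}}$ — and it is precisely what produces the exponent $\kappa_d$. Corollary~\ref{Lobe} is only valid locally on balls of radius $r_1$, so Corollary~\ref{Lobe} is applied at the centers of the large boxes $\Lambda_R(y)$, $y\in\mathcal{G}$; the local radius stays fixed but the \emph{number} of application points is $(L/R)^d$, not $L^d$.

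\textbf{The $L^2\to L^\infty$ passage.} Your concern here is valid, but you leave it unresolved; the paper handles it cleanly and concretely by writing $\psi = e^{-(H_\Lambda+\theta)}\bigl(e^{(H_\Lambda+\theta)}P\bigr)\psi$ for $\psi\in\Ran P$, using the ultracontractivity bound $\|e^{-t(H_\Lambda+\theta)}\|_{L^2\to L^\infty}<\infty$ (which holds for these singular potentials via a Trotter/Kato comparison with $e^{\frac12 t\Delta}$), and the operator norm bound $\|e^{(H_\Lambda+\theta)}P\|_{L^2\to L^2}\leq e^{E_0+\theta+1}$. Since $(H_\Lambda-E)\psi = P(H_\Lambda-E)\psi\in\Ran P$, the same estimate gives $\|(H_\Lambda-E)\psi\|_\infty\leq C\varepsilon\|\psi\|_2$. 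No separate elliptic-regularity step is needed here; your proposed "elliptic regularity on small balls" would control $\psi$ in terms of $\Delta\psi$, not the other way around.

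\textbf{The QUCP geometry.} Your statement "pivot $x_j$ at distance of order $1$ from $\Theta$ (and thus $Q\lesssim L$)" is internally contradictory, and your schematic inequality has $(\delta/L)^{\cdots(L^{(4p-2d)/(3p-2d)}+\cdots)}$, which is not the bound the paper produces. In the paper, $\Theta$ is the fixed unit box $\Lambda_1$ (a $\psi_0\in\mathcal{F}_R$ with $\|\chi_{\Lambda_1}\psi_0\|_2\gtrsim\rho$ is produced by the counting argument of \cite[Eqs.~(3.102)--(3.106)]{BKl}, not by a pointwise maximum principle as you describe), and the pivot $y_0\in\mathcal{G}$ is chosen at distance $\sim R$ from $\Lambda_1$, so $Q\sim R$, not $L$. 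Thus the QUCP exponent is $\sim R^{(4p-2d)/(3p-2d)}$; the base is $\delta/R$; and these are balanced against the local term $C^{N^2}\delta^{2N}$ with $N\sim(\rho R^d)^{1/(d-1)}$ and $\delta\sim(C^N R)^{-1}$. Solving the resulting inequality with $\rho$ tied to $R$ by $\rho\sim R^{((d-4)p+2d)/(3p-2d)}$ is what yields $\kappa_d$. Without the intermediate scale $R$, and with $Q\sim L$, the bookkeeping does not close.
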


\begin{proof}
We fix $\varepsilon\in(0,\frac{1}{2}]$, let $L\geq L_0(\varepsilon)$, where $L_0(\varepsilon)>0$ will be specified later, and take a box $\Lambda=\Lambda_L$. There exists $\theta=\theta(d,p,V^{(1)}_{\infty},V^{(2)}_p)\geq0$  such that  (see \cite[Eq. (2.21) and (2.46)]{KT})
\begin{equation}\label{tmdp1}
\left|\int_{\mathbb{R}^d}\abs{V}\abs{f}^2dx\right|\leq\theta\|f\|_2^2+\tfrac{1}{2}\|\nabla f\|_2^2 \qtx{for all}f\in\mathcal{D}(\nabla).
\end{equation}
It follows that  $\sigma(H_{\Lambda})\subset[-\theta,\infty)$, and hence it suffices to consider $E_0\geq-\theta-1$ and $E\in[-\theta-1,E_0]$. We set $P=\chi_{[E,E+\varepsilon]}(H_{\Lambda})$; note that $\Ran P\subset\mathcal{D}(H_{\Lambda})\subset\mathrm{H}^1(\Lambda)$ and
\begin{equation}\label{tmdp2}
\|(H_{\Lambda}-E)\psi\|_2\leq\varepsilon\|\psi\|_2\quad\mbox{for all}\ \psi\in\Ran P.
\end{equation}
Recalling  that for $t>0$ we have
\begin{align}\label{tmdp4}
\|e^{-t(H_{\Lambda}+\theta)}\|_{\mathrm{L}^2(\Lambda)\rightarrow\mathrm{L}^{\infty}(\Lambda)}&\leq\|e^{\frac{1}{2}t\Delta_{\Lambda}}\|_{\mathrm{L}^2(\Lambda)\rightarrow\mathrm{L}^{\infty}(\Lambda)}
\nonumber\\&\leq\|e^{\frac{1}{2}t\Delta}\|_{\mathrm{L}^2(\mathbb{R}^d)\rightarrow\mathrm{L}^{\infty}(\mathbb{R}^d)}<\infty,
\end{align}
for $\psi\in\Ran P$ we get
\begin{align}\label{tmdp3}
\|\psi\|_{\infty}&=\|e^{-(H_{\Lambda}+\theta)}e^{(H_{\Lambda}+\theta)}\psi\|_{\infty} \\&
\leq\|e^{-(H_{\Lambda}+\theta)}\|_{\mathrm{L}^2(\Lambda)\rightarrow\mathrm{L}^{\infty}(\Lambda)}\|e^{(H_{\Lambda}+\theta)}\psi\|_2
\leq C_d e^{E_0+\theta+1}\|\psi\|_2. \nonumber
\end{align}
Since $P(H_{\Lambda}-E)\psi=(H_{\Lambda}-E)P\psi=(H_{\Lambda}-E)\psi$\ for\ $\psi\in\Ran P$, we conclude that
\begin{equation}\label{tmdp5}
\|(H_{\Lambda}-E)\psi\|_{\infty}\leq\varepsilon C_{d,p,V^{(1)}_{\infty},V^{(2)}_p,E_0}\|\psi\|_2\quad\mbox{for all}\ \psi\in\Ran P.
\end{equation}
Since $V\in\mathrm{L}^\infty(\mathbb{R}^d)+\mathrm{L}^p(\mathbb{R}^d)$ with $p>2$, we have $V\in\mathrm{L}_{loc}^2(\mathbb{R}^d)$. Therefore $V\psi\in\mathrm{L}^2(\Lambda)$ as $\psi$ is bounded. Thus we have $\Delta\psi=-H_{\Lambda}\psi+V\psi \in \mathrm{L}^2(\Lambda)$.

Let
\begin{equation}\label{tmdp6}
\rho:=\eta_{\Lambda_L}([E,E+\varepsilon])=\tfrac{1}{L^d}\tr P.
\end{equation}
We have the uniform upper bound (e.g., \cite[Eq. (A.6)]{GK1})
\begin{equation}\label{tmdp7}
\rho\leq\rho_{ub}:=C_{d,p,V^{(1)}_{\infty},V^{(2)}_p,E_0}; \qtx{without loss of generality} \rho_{ub}\geq1.
\end{equation}

Let $\gamma_d$ be the constant in Theorem \ref{Lobe}; we assume $2^d\gamma_d\geq1$ without loss of generality.  We take
\begin{equation}\label{tmdp8}
L^d>2^{3d+1}\gamma_d\frac{\rho_{ub}}{\rho};
\end{equation}
 otherwise there is nothing to prove for $L$  large. Let  $R$ satisfy
\begin{equation}\label{tmdp9}
2^{d+1}\gamma_d\frac{\rho_{ub}}{\rho}\leq R^d<\left(\frac{L}{4}\right)^d;
\end{equation}
we have
\begin{equation}\label{tmdp10}
2\leq\rho R^d\ \mbox{and}\ 2\leq R^d.
\end{equation}
Using \eqref{tmdp7} and \eqref{tmdp9}, we have
\begin{equation}\label{tmdp11}
N:=\left\lfloor\left(\frac{\rho}{2^{d+1}\gamma_d}\right)^{\frac{1}{d-1}}R^{\frac{d}{d-1}}\right\rfloor\geq\left\lfloor\rho_{ub}^{\frac{1}{d-1}}\right\rfloor\geq1.
\end{equation}

We now choose  $\mathcal{G}\subset\Lambda$ such that
\begin{equation}\label{tmdp12}
\overline{\Lambda}=\bigcup_{y\in\mathcal{G}}\overline{\Lambda_R}(y)\quad\mbox{and}\quad\sharp\mathcal{G}=\left(\left\lceil\tfrac{L}{R}\right\rceil\right)^d\in\left[\left(\tfrac{L}{R}\right)^d,\left(\tfrac{2L}{R}\right)^d\right]\cap\mathbb{N}.
\end{equation}
Give $y_1\in\mathcal{G}$, we apply Corollary~ \ref{Lobe} with $\Omega=\Lambda\supset B(y_1,1)$, $W=V-E$, and $\mathcal{F}=\Ran P$. The hypothesis \eqref{tLb1} follows from \eqref{tmdp5}. We conclude that there exists a vector subspace $\mathcal{F}_{y_1,N}$ of $\Ran P$ and $r_0=r_0(d,p,V^{(1)}_{\infty},V^{(2)}_p,E_0)\in(0,1)$ such that, using \eqref{tmdp11} and \eqref{tmdp9}, we have
\begin{equation}\label{tmdp13}
\dim\mathcal{F}_{y_1,N}\geq\rho L^d-\gamma_dN^{d-1}\geq1,
\end{equation}
and for all $\psi\in\mathcal{F}_{y_1,N}$ we have
\begin{equation}\label{tmdp14}
|\psi(y_1+x)|\leq(C_{d,p,V^{(1)}_{\infty},V^{(2)}_p,E_0}^{N^2}|x|^{N+1}+\varepsilon C_{d,p,V^{(1)}_{\infty},V^{(2)}_p,E_0})\|\psi\|_2\qtx{if}\abs{x}<r_0.
\end{equation}
Picking $y_2\in\mathcal{G}$, $y_2\neq y_1$, and apply Theorem \ref{Lobe} with $\Omega=\Lambda\supset B(y_2,1)$, $W=V-E$, and $\mathcal{F}=\mathcal{F}_{y_1,N}$, we obtain a vector subspace $\mathcal{F}_{y_1,y_2,N}$ of $\mathcal{F}_{y_1,N}$, and hence of $\Ran P$, such that
\begin{equation}\label{tmdp15}
\dim\mathcal{F}_{y_1,y_2,N}\geq\dim\mathcal{F}_{y_1,N}-\gamma_dN^{d-1}\geq\rho L^d-2\gamma_dN^{d-1}\geq1,
\end{equation}
and \eqref{tmdp14} holds for all $\psi\in\mathcal{F}_{y_1,y_2,N}$ also with $y_2$ substituted for $y_1$. Repeating this procedure until we exhaust the sites in $\mathcal{G}$, we conclude that there exists a vector subspace $\mathcal{F}_R$ of $\Ran P$ and $r_0=r_0(d,p,V^{(1)}_{\infty},V^{(2)}_p,E_0)\in(0,1)$, such that
\begin{equation}\label{tmdp16}
\dim\mathcal{F}_R\geq\rho L^d-\left(\tfrac{2L}{R}\right)^d\gamma_dN^{d-1}\geq\tfrac{1}{2}\rho L^d\geq2^{3d}\gamma_d\rho_{ub}\geq1,
\end{equation}
where we used the assumption \eqref{tmdp8}, and for all $\psi\in\mathcal{F}_R$ and $y\in\mathcal{G}$ we have
\begin{equation}\label{tmdp17}
|\psi(y+x)|\leq(C_{d,p,V^{(1)}_{\infty},V^{(2)}_p,E_0}^{N^2}|x|^{N+1}+\varepsilon C_{d,p,V^{(1)}_{\infty},V^{(2)}_p,E_0})\|\psi\|_2\quad\mbox{if}\ x<r_0.
\end{equation}

We let $Q_R$ denote the orthogonal projection onto $\mathcal{F}_R$. Since\ $\tr Q_R=\dim\mathcal{F}_R$, it follows from \eqref{tmdp16} by the argument in \cite[Eqs.~(3.102)-(3.106)]{BKl}
that there  exists $\psi_0=Q_R\psi_0$ with $\|\psi_0\|_2=1$ such that
\begin{equation}\label{tmdp22}
\gamma\rho \le \|\chi_{\Lambda_1}\psi_0\|_2  \le 1,  \qtx{where}\ \gamma=\gamma_{d,p,V^{(1)}_{\infty},V^{(2)}_p,E_0}>0.
\end{equation}

We pick $y_0\in\mathcal{G}$ such that
\begin{equation}\label{tmdp23}
\tfrac{1}{4}<\tfrac{1}{4}R\leq\dist(y_0,\Lambda_1)\leq2\sqrt{d}R,
\end{equation}
which can be done by our construction, and apply Corollary \ref{qucpcor} with $x_0=y_0$, $\Theta=\Lambda_1$, and potential $V-E$; note that
\begin{equation}\label{tmdp24}
\tfrac{R}{4}+\sqrt{d}\leq Q=Q(y_0,\Lambda_1)\leq2\sqrt{d}R+\sqrt{d}\leq3\sqrt{d}R.
\end{equation}
Let $0<\delta<\delta_0:=\min\left\{\frac{1}{2},r_0\right\}$, where $r_0$ is as in \eqref{tmdp17}. It follows from Corollary \ref{qucpcor}, using \eqref{tmdp2}, that
\begin{equation}\label{tmdp25}
\left(\frac{\delta}{3\sqrt{d}R}\right)^{m(1+K^{\frac{2p}{3p-2d}})(R^{\frac{4p-2d}{3p-2d}}-\log\|\psi_0\chi_{\Lambda_1}\|_{2})}\|\psi_0\chi_{\Lambda_1}\|_{2}^2\leq\|\psi_0\chi_{B(y_0,\delta)}\|_{2}^2+\varepsilon^2,
\end{equation}
with a constant $m=m_d>0$ and\ $K=V^{(1)}_{\infty}+V^{(2)}_p+|E|$. Using \eqref{tmdp17} and \eqref{tmdp22}, we get
\begin{align}\label{tmdp26}
&\left(\frac{\delta}{3\sqrt{d}R}\right)^{m(1+K^{\frac{2p}{3p-2d}})(R^{\frac{4p-2d}{3p-2d}}-\log(\gamma p))}(\gamma p)^2
\\&\qquad\qquad\quad\leq C_dC_{d,p,V^{(1)}_{\infty},V^{(2)}_p,E_0}^{N^2}\delta^{2(N+1)+d}+C_{d,p,V^{(1)}_{\infty},V^{(2)}_p,E_0}\varepsilon^2.\nonumber
\end{align}

Since $\rho\geq2R^{-d}$ and $\frac{\delta}{3\sqrt{d}R}<\frac{\delta}{3\sqrt{d}}<1$ by \eqref{tmdp10}, the inequality \eqref{tmdp26} implies the existence of strictly positive constants $\tilde{R}=\tilde{R}_{d,p,V^{(1)}_{\infty},V^{(2)}_p,E_0}$ and $M=M_{d,p,V^{(1)}_{\infty},V^{(2)}_p,E_0}$ such that
\begin{equation}\label{tmdp27}
\left(\tfrac{\delta}{R}\right)^{MR^{\frac{4p-2d}{3p-2d}}}\leq C_{d,p,V^{(1)}_{\infty},V^{(2)}_p,E_0}^{N^2}\delta^{2N}+C_{d,p,V^{(1)}_{\infty},V^{(2)}_p,E_0}\varepsilon^2\quad\mbox{for}\ R\geq\tilde{R}.
\end{equation}
We require
\begin{equation}\label{tmdp28}
R>\widehat{R}=\max\{\tilde{R},\delta_0^{-1}\},
\end{equation}
and choose $\delta$ by (note $C_{d,p,V^{(1)}_{\infty},V^{(2)}_p,E_0}^N\geq1$)
\begin{equation}\label{tmdp29}
\delta=(C_{d,p,V^{(1)}_{\infty},V^{(2)}_p,E_0}^NR)^{-1}<\delta_0,\ \mbox{so}\ \tfrac{\delta}{R}=C_{d,p,V^{(1)}_{\infty},V^{(2)}_p,E_0}^N\delta^2=(C_{d,p,V^{(1)}_{\infty},V^{(2)}_p,E_0}^NR^2)^{-1},
\end{equation}
obtaining
\begin{equation}\label{tmdp30}
\left(\tfrac{\delta}{R}\right)^{MR^{\frac{4p-2d}{3p-2d}}}\leq \left(\tfrac{\delta}{R}\right)^N+C_{d,p,V^{(1)}_{\infty},V^{(2)}_p,E_0}\varepsilon^2.
\end{equation}

We now take $d=2,3$ and take $R$ large enough so that
\begin{equation}\label{tmdp31}
\left(\tfrac{\delta}{R}\right)^N\leq\tfrac{1}{2}\left(\tfrac{\delta}{R}\right)^{MR^{\frac{4p-2d}{3p-2d}}},\quad\mbox{i.e.,}\ (C_{d,p,V^{(1)}_{\infty},V^{(2)}_p,E_0}^NR^2)^{N-MR^{\frac{4p-2d}{3p-2d}}}\geq2.
\end{equation}
To see this, note that $\frac{4p-2d}{3p-2d}<\frac{d}{d-1}$ when $p>\frac{2d}{4-d}$ for $d=2,3$, so
\begin{equation}\label{tmdp32}
MR^{\frac{4p-2d}{3p-2d}}<N=\left\lfloor\left(\frac{\rho}{2^{d+1}\gamma_d}\right)^{\frac{1}{d-1}}R^{\frac{d}{d-1}}\right\rfloor\ \mbox{if}\ \rho>C''_{d,p,V^{(1)}_{\infty},V^{(2)}_p,E_0}R^{\frac{(d-4)p+2d}{3p-2d}},
\end{equation}
and hence
\begin{equation}\label{tmdp33}
(C_{d,p,V^{(1)}_{\infty},V^{(2)}_p,E_0}^NR^2)^{N-MR^{\frac{4p-2d}{3p-2d}}}\!\!\geq4^{N-MR^{\frac{4p-2d}{3p-2d}}}\!\!\geq2\quad\mbox{if}\ \rho>C'''_{d,p,V^{(1)}_{\infty},V^{(2)}_p,E_0}R^{\frac{(d-4)p+2d}{3p-2d}}.
\end{equation}

We now choose $R$ by
\begin{equation}\label{tmdp34}
\rho=c_{d,p,V^{(1)}_{\infty},V^{(2)}_p,E_0}R^{\frac{(d-4)p+2d}{3p-2d}},
\end{equation}
where the constant $c_{d,p,V^{(1)}_{\infty},V^{(2)}_p,E_0}$ is chosen large enough to ensure that, using \eqref{tmdp7},  all the conditions \eqref{tmdp9}, \eqref{tmdp28},  \eqref{tmdp33}, and  \eqref{tmdp31} are satisfied.  It follows from \eqref{tmdp30} and \eqref{tmdp31} that
\begin{align}\label{tmdp35}
&\tfrac{1}{2}\left(\tfrac{\delta}{R}\right)^{MR^{\frac{4p-2d}{3p-2d}}}\leq C_{d,p,V^{(1)}_{\infty},V^{(2)}_p,E_0}\varepsilon^2, \quad\text{that is},
\\&\quad (C_{d,p,V^{(1)}_{\infty},V^{(2)}_p,E_0}^NR^2)^{-MR^{\frac{4p-2d}{3p-2d}}}\leq2C_{d,p,V^{(1)}_{\infty},V^{(2)}_p,E_0}\varepsilon^2.\nonumber
\end{align}
Using  \eqref{tmdp11}, and  \eqref{tmdp34} with a sufficiently large constant $c_{d,p,V^{(1)}_{\infty},V^{(2)}_p,E_0}$, we get from \eqref{tmdp35} that
\begin{equation}\label{tmdp36}
e^{-M'R^{\frac{8p-4d}{3p-2d}}}=e^{-M'R^{\frac{(d-4)p+2d}{(3p-2d)(d-1)}+\frac{d}{d+1}+\frac{8p-4d}{3p-2d}}}\leq C_{d,p,V^{(1)}_{\infty},V^{(2)}_p,E_0}\varepsilon^2,
\end{equation}
where $M'=M'_{d,p,V^{(1)}_{\infty},V^{(2)}_p,E_0}$. Thus
\begin{equation}\label{tmdp37}
\log\frac{1}{\varepsilon}\leq C_{d,p,V^{(1)}_{\infty},V^{(2)}_p,E_0}R^{\frac{8p-4d}{3p-2d}}=\frac{\tilde{C}_{d,p,V^{(1)}_{\infty},V^{(2)}_p,E_0}}{\rho^{\frac{8p-4d}{(4-d)p-2d}}},
\end{equation}
and hence
\begin{equation}\label{tmdp38}
\rho\leq\tilde{C}_{d,p,V^{(1)}_{\infty},V^{(2)}_p,E_0}\left(\log\tfrac{1}{\varepsilon}\right)^{-\frac{(4-d)p-2d}{8p-4d}},
\end{equation}
as long as $L$ is large enough to satisfy \eqref{tmdp9} with the choice of $R$ in \eqref{tmdp34},
namely $L\geq L_{d,p,V^{(1)}_{\infty},V^{(2)}_p,E_0}\left(\log\frac{1}{\varepsilon}\right)^{\frac{3p-2d}{8p-4d}}$.
\end{proof}


\begin{thebibliography}{ABR}

\bibitem[ABR]{ABR}
Axler, S., Bourdon, P., Ramey, W.: Harmonic Function Theory, 2nd edn. Graduate Texts
in Mathematics, vol. 137. Springer, New York (2001)

\bibitem[B]{Be} Bers, L.: Local behavior of solutions of general linear elliptic equations.
 Comm. Pure Appl. Math.  \textbf{8}, 473-496 (1955)

\bibitem[BoK]{BK}
Bourgain, J., Kenig, C.: On localization in the continuous Anderson-Bernoulli model in higher dimension,  Invent. Math. \textbf{161}, 389-426 (2005)

\bibitem[BoKl]{BKl}
Bourgain, J., Klein, A.: Bounds on the density of states for Schr\"{o}dinger operators. Invent. Math. \textbf{194}, 41-72 (2013). \doi{10.1007/s00222-012-0440-1}



\bibitem[GK1]{GK1}
Germinet, F., Klein, A.: A characterization of the Anderson metal-insulator transport transition.
Duke Math. J. \textbf{124}, 309-351 (2004).\doi{10.1215/S0012-7094-04-12423-6}

\bibitem[GK2]{GKloc}
Germinet, F., Klein, A.: A comprehensive proof of localization for continuous Anderson
models with singular random potentials. J. Eur. Math. Soc. \textbf{15}, 53-143 (2013).
\doi{10.4171/JEMS/356}

\bibitem[GiT]{GiT}
Gilbarg, D., Trudinger, N.: Elliptic Partial Differential Equations of Second Order. Classics
in Mathematics. Springer, Berlin (2001). Reprint of the 1998 edition

\bibitem[HW]{HW} Hartman, P., Wintner, A.: On the local behavior of solutions of non-parabolic partial
 differential equations.
 Amer. J. Math.  \textbf{75}, 449-476.  (1953)

\bibitem[Ho]{Ho}
Howard, R.: The Gronwall Inequality [Online]. Available at
\url{http://www. math.sc.edu/howard/Notes/gronwall.pdf}


\bibitem[KT]{KT}
Klein, A., Tsang, C.S.S.: Quantitative unique continuation principle for Schr\"{o}dinger operators with singular potentials.  Preprint available at \href{http://arxiv.org/abs/1408.1992}{arXiv:1408.1992 [math-ph]}


\bibitem[S]{Si}
Simon, B.: Schrodinger semi-groups. Bull. Amer. Math. Soc. \textbf{7}, 447-526 (1982)

\bibitem[T]{Tr}
Trudinger, N.: Linear elliptic operators with measurable coefficients. Ann. Scuola. norm. sup. Pisa, Sci. fis. mat., III. Ser.27, 265-308 (1973)

\end{thebibliography}
\end{document}